\numberwithin{equation}{section}
\theoremstyle{plain}
\newtheorem{theorem}[equation]{Theorem}
\newtheorem{lemma}[equation]{Lemma}
\newtheorem{corollary}[equation]{Corollary}
\theoremstyle{definition}
\newtheorem{definition}[equation]{Definition}
\newtheorem{example}[equation]{Example}
\newtheorem*{acknowledgment}{Acknowledgment}
\newtheorem*{CLB}{Condition (LB)}
\newtheorem*{CIH}{Condition (IH)}
\newtheorem*{CLH}{Condition (LH)}
\newtheorem*{CS}{Condition (S)}
\theoremstyle{remark}
\newtheorem{remark}[equation]{Remark}
\newtheorem{notation}[equation]{Notation}
\newcommand{\dv}{\operatorname{div}}
\newcommand{\supp}{\operatorname{supp}}
\newcommand{\dist}{\operatorname{dist}}
\newcommand{\diam}{\operatorname{diam}}
\newcommand{\mysection}[1]{\section{#1}
\setcounter{equation}{0}}
\renewcommand{\vec}[1]{\boldsymbol{#1}}
\newcommand{\bR}{\mathbb R}
\newcommand{\bH}{\mathbb H}
\newcommand\Lt{{}^t\!L}
\newcommand{\ip}[1]{\left\langle#1\right\rangle}
\providecommand{\bigset}[1]{\bigl\{#1\bigr\}}
\providecommand{\abs}[1]{\lvert#1\rvert}
\providecommand{\Abs}[1]{\left\lvert#1\right\rvert}
\providecommand{\bigabs}[1]{\bigl\lvert#1\bigr\rvert}
\providecommand{\Bigabs}[1]{\Bigl\lvert#1\Bigr\rvert}
\providecommand{\Biggabs}[1]{\Biggl\lvert#1\Biggr\rvert}
\providecommand{\norm}[1]{\lVert#1\rVert}
\providecommand{\bignorm}[1]{\bigl\lVert#1\bigr\rVert}
\renewcommand{\epsilon}{\varepsilon}
\renewcommand{\qedsymbol}{$\blacksquare$}
\begin{document}
\title[Green's matrix]
{Global pointwise estimates for Green's matrix of second order elliptic systems}

\author[K. Kang]{Kyungkeun Kang}
\address[K. Kang]{Department of Mathematics, Sungkyunkwan University, Suwon 440-746, Republic of Korea}
\email{kkang@skku.edu}

\author[S. Kim]{Seick Kim}
\address[S. Kim]{Department of Mathematics, Yonsei University, Seoul 120-749, Republic of Korea}
\curraddr{Department of Computational Science and Engineering, Yonsei University, Seoul 120-749, Republic of Korea}
\email{kimseick@yonsei.ac.kr}

\subjclass[2000]{Primary 35A08, 35B65; Secondary 35J45}

\keywords{Green's function, Green's matrix, global bounds, second order elliptic system.}

\begin{abstract}
We establish global pointwise bounds for the Green's matrix for divergence form, second order elliptic systems in a domain under the assumption that weak solutions of the system vanishing on a portion of the boundary satisfy a certain local boundedness estimate.
Moreover, we prove that such a local boundedness estimate for weak solutions of the system is equivalent to the usual global pointwise bound for the Green's matrix.
In the scalar case, such an estimate is a consequence of De Giorgi-Moser-Nash theory and holds for equations with bounded measurable coefficients in arbitrary domains.
In the vectorial case, one need to impose certain assumptions on the coefficients of the system as well as on domains to obtain such an estimate.
We present a unified approach valid for both the scalar and vectorial cases and discuss several applications of our result.
\end{abstract}

\maketitle

\mysection{Introduction}        \label{intro}

In this article, we are concerned with the Green's matrix for elliptic systems
\begin{equation}    \label{eq0.0}
\sum_{j=1}^m L_{ij} u^j:= -\sum_{j=1}^m \sum_{\alpha,\beta=1}^ n D_{\alpha}\Bigl(A^{\alpha\beta}_{ij} D_\beta u^j\Bigr), \quad i=1,\ldots, m,
\end{equation}
in a (possibly unbounded) domain $\Omega\subset \bR^n$, $n\geq 3$. We assume that the coefficients are measurable functions defined in the whole space $\bR^n$ satisfying the strong ellipticity condition
\begin{equation}    \label{eqP-02}
\sum_{i,j=1}^m \sum_{\alpha,\beta=1}^n A^{\alpha\beta}_{ij}(x)\xi^j_\beta \xi^i_\alpha \geq \nu\sum_{i=1}^m\sum_{\alpha=1}^n \bigabs{\xi^i_\alpha}^2 =:\nu \bigabs{\vec \xi}^2, \quad\forall \vec\xi \in \bR^{mn},\quad\forall x\in\bR^n,
\end{equation}
and also the uniform boundedness condition
\begin{equation}    \label{eqP-03}
\sum_{i,j=1}^m \sum_{\alpha,\beta=1}^n \Bigabs{A^{\alpha\beta}_{ij}(x)}^2\le \nu^{-2},\quad\forall x\in\bR^n,
\end{equation}
for some constant $\nu\in (0,1]$.
We do not assume the the coefficients are symmetric.
We will later impose some further assumptions on the operator but not explicitly on its coefficients.

In the scalar case (i.e. $m=1$), the Green's matrix becomes a scalar function and is usually called the Green's function.
It is well known that the Green's function $G(x,y)$ is nonnegative in $\Omega$ and for all $x, y \in \Omega$ with $x\neq y$, we have
\begin{equation}        \label{eq0.1}
G(x,y)\leq K \abs{x-y}^{2-n},
\end{equation}
where $K$ is a constant depending on the dimension $n$ and the ellipticity constant  $\nu$ of the operator; see \cite{LSW}, \cite{GW}.
It is also known that if $\Omega$ is a bounded domain satisfying the uniform exterior cone condition, then for all $x, y\in\Omega$ with $x\neq y$, we have
\begin{equation}        \label{eq0.2}
G(x,y)\leq K d^\alpha_y \abs{x-y}^{2-n-\alpha};\quad d_y=\dist(y,\partial\Omega),
\end{equation}
where $K$ and $\alpha\in (0,1)$ are constants depending only on $n, \nu$, and $\Omega$; see \cite{GW}.
The methods used in \cite{LSW} and \cite{GW} rely heavily on the Harnack's inequality and the maximum principle and does not work for the general vectorial case.
By assuming that  $\Omega$ is a bounded $C^1$ domain and the coefficients of the operator are uniformly continuous in $\overline\Omega$ (or belong to VMO), Dolzmann and M\"uller \cite{DM} proved the global estimate \eqref{eq0.1} in the vectorial setting.
It should be noted that Fuchs \cite{Fuchs84, Fuchs86} obtained a similar result earlier, but under a stronger assumption that the coefficients are H\"older continuous.
Recently, Hofmann and Kim \cite{HK07} derived the existence of a Green's matrix in an arbitrary domain, under the assumption that weak solutions of the system satisfy interior H\"older continuity estimates.
They also derived various estimates for the Green's matrix of such a system, including an interior version of the estimate \eqref{eq0.1}, which was applied to the development of the layer potential method for equations with complex coefficients in \cite{AAAHK}.
Their method is interesting because it works for both scalar and vectorial cases, but however, they did not attempt to derive the global estimates \eqref{eq0.1} or \eqref{eq0.2} for the Green's matrix in the vectorial setting.

The goal of this article is to present how one can derive a global estimate corresponding to \eqref{eq0.1} for Green's matrix of the elliptic systems \eqref{eq0.0} in a domain $\Omega$ using a local boundedness estimate for the weak solutions of the system vanishing on a portion of the boundary; see Condition~\eqref{LB} below for the precise statement of the local boundedness estimate.
In fact, we show that such a local boundedness estimate is a necessary and sufficient condition for the Green's matrix of the system to have a global pointwise bound like \eqref{eq0.1}.
We will also show how to derive a global estimate like \eqref{eq0.2} for Green's matrix of the elliptic system \eqref{eq0.0} in a domain $\Omega$ by using a local H\"older continuity estimate for the solutions of the system vanishing on a portion of the boundary $\partial\Omega$; see Condition~ \eqref{LH} below for the statement of the local H\"older estimate, and also the condition ~\eqref{eqP-09} in Remark~\ref{rmk:P-03}, which is a little bit weaker.
The novelty of our work is in presenting a unifying method that re-proves the global estimates \eqref{eq0.1} and \eqref{eq0.2} for the Green's function for the uniformly elliptic operators with bounded measurable coefficients as well as the corresponding estimates for the Green's matrix of the elliptic systems \eqref{eq0.0}, for instance, in a bounded $C^1$ domain with uniformly continuous or VMO coefficients.
Moreover, it has other interesting applications to $L^\infty$-perturbation of diagonal systems in a domain satisfying the uniform exterior cone condition, elliptic systems satisfying Legendre-Hadamard condition in a bounded $C^1$ domain with principal coefficients in VMO and lower order terms in $L^\infty$, Stokes systems in a three-dimensional Lipschitz domain, etc.; see Section~\ref{sec:app} below.
As a matter of fact, application to $L^\infty$-perturbation of diagonal systems in a domain satisfying the uniform exterior cone condition shows the power and flexibility of our method since that result does not seem to follow from other known methods, such as that based on the $L^p$ theory by Dolzmann and M\"uller \cite{DM}.

The organization of the paper is as follows. In Section~\ref{pre}, we introduce some notations and definitions including our definition of the Green's matrix of the system \eqref{eq0.0} in $\Omega$. In Section~\ref{main}, we give precise statement of the conditions concerning the local estimates for weak solutions of the systems and state our main theorems.  In Section~\ref{sec:app}, we present applications of our main results. The proofs of our main results are given in Section~\ref{sec:p} and a technical lemma is proved in Appendix.

Finally, a few remarks are in order. We do not treat the case $n=2$ in our paper. 
In two dimension, the Green's matrix has logarithmic growth and requires some different methods; see \cite{DM} and also \cite{DK09}.
As a matter of fact, the method used in this paper breaks down and does not work in that case.
By this reason, the two dimensional case will be discussed in a separate paper \cite{CDK}, where we will also treat a parabolic extension of our results.
As alluded earlier, the main difference between our result and that of \cite{HK07} is that they were mostly concerned with the Green's matrix of a $L^\infty$-perturbed diagonal systems in the whole space $\bR^n$ and focused on interior estimates of the Green's matrix while our paper is mainly concerned with the global estimates like \eqref{eq0.1} and \eqref{eq0.2}, which we believe are quite more useful in practice, especially in the vectorial case.
In \cite{AT},  Auscher and Tchamitchian introduced the ``Dirichlet property (D)'' in connection with the Gaussian estimates for the heat kernel of the operator $L$, which is very similar to the condition \eqref{LH} of this article.
We would like to hereby thank Pascal Auscher for kindly informing us about the paper \cite{AT}.

\mysection{Notations and Definitions}       \label{pre}

Let $L$ be an elliptic operator acting on column vector valued functions $\vec u=(u^1,\ldots,u^m)^T$ defined on a domain $\Omega\subset \bR^n$, $n\ge 3$, in the following way:
\[
L\vec u = -D_\alpha \bigl(\vec A^{\alpha\beta}\, D_\beta \vec u\bigr),
\]
where we use the usual summation convention over repeated indices $\alpha,\beta=1,\ldots, n$, and $\vec A^{\alpha\beta}=\vec{A}^{\alpha\beta}(x)$ are $m\times m$ matrix valued functions on $\bR^n$ with entries $A^{\alpha\beta}_{ij}$ that satisfy the conditions \eqref{eqP-02} and \eqref{eqP-03}.
Notice that the $i$-th component of the column vector $L \vec u$ coincides with $\sum_{j} L_{ij} u^j$ in \eqref{eq0.0}.
The adjoint operator $\Lt$ of $L$ is defined by
\[\Lt \vec u = -D_\alpha \bigl({}^t\!\vec A^{\alpha\beta} D_\beta \vec u\bigr),\]
where ${}^t\!\vec A^{\alpha\beta}=(\vec A^{\beta\alpha})^T$; i.e., ${}^t\!A^{\alpha\beta}_{ij}=A^{\beta\alpha}_{ji}$.
We use the same function space $Y^{1,2}(\Omega)$ as in \cite{HK07}. For reader's convenience, we reproduce the definition below.

\begin{definition}
For an open set $\Omega\subset\bR^n$ ($n\ge 3$), the space $Y^{1,2}(\Omega)$ is defined as the family of all weakly differentiable functions $u\in L^{2n/(n-2)}(\Omega)$, whose weak derivatives are functions in $L^2(\Omega)$.
The space $Y^{1,2}(\Omega)$ is endowed with the norm
\[
\norm{u}_{Y^{1,2}(\Omega)}:=\norm{u}_{L^{2n/(n-2)}(\Omega)}+\norm{D u}_{L^2(\Omega)}.
\]
We define $Y^{1,2}_0(\Omega)$ as the closure of $C^\infty_c(\Omega)$ in $Y^{1,2}(\Omega)$,
where $C^\infty_c(\Omega)$ is the set of all infinitely differentiable  functions with compact supports in $\Omega$.
\end{definition}

\begin{remark}          \label{rmk2.4q}
If $\abs{\Omega}<\infty$, then H\"older's inequality implies $Y^{1,2}(\Omega)\subset W^{1,2}(\Omega)$.
In the case $\Omega = \mathbb{R}^n$, we have  $Y^{1,2}(\bR^n)=Y^{1,2}_0(\bR^n)$.
Notice that by the Sobolev inequality, it follows that
\begin{equation}
\label{eqP-14}
\norm{u}_{L^{2n/(n-2)}(\Omega)} \le C(n) \norm{D u}_{L^2(\Omega)},\quad
\forall u\in Y^{1,2}_0(\Omega).
\end{equation}
Therefore, we have $W^{1,2}_0(\Omega)\subset Y^{1,2}_0(\Omega)$ and $W^{1,2}_0(\Omega)=Y^{1,2}_0(\Omega)$ if $\abs{\Omega}<\infty$; see \cite{MZ}.
\end{remark}

\begin{definition}
Let $\varSigma \subset \overline \Omega$ and $u$ be a $Y^{1,2}(\Omega)$ function.
We say that $u$ vanishes (or write $u=0$) on $\varSigma$ if $u$ is a limit in $Y^{1,2}(\Omega)$ of a sequence of functions in $C^\infty_c(\overline\Omega\setminus \varSigma)$.
\end{definition}

\begin{notation}
We denote by $L^\infty_c(\Omega)$  the family of all $L^\infty(\Omega)$ functions with compact supports in $\overline\Omega$. Notice that $L^\infty_c(\Omega)=L^\infty(\Omega)$ if $\Omega$ is bounded.
\end{notation}

\begin{notation}
We denote $\Omega_R(x)=\Omega\cap B_R(x)$ and $\varSigma_R(x)=\partial\Omega\cap B_R(x)$ for any $R>0$.
We abbreviate $\Omega_R=\Omega_R(x)$ and $\varSigma_R=\varSigma_R(x)$ if the point $x$ is well understood in the context.
\end{notation}

\begin{definition}      \label{def2}
We say that an $m\times m$ matrix valued function $\vec G(x,y)$, with entries $G_{ij}(x,y)$  defined on the set $\bigset{(x,y)\in\Omega\times\Omega: x\neq y}$, is a Green's matrix of $L$ in $\Omega$ if it satisfies the following properties:
\begin{enumerate}[i)]
\item
$\vec G(\cdot,y) \in W^{1,1}_{loc}(\Omega)$ and $L\vec G(\cdot,y)=\delta_y I$ for all $y\in\Omega$, in the sense that
\begin{equation}        \label{eq2.6r}
\int_{\Omega}A^{\alpha\beta}_{ij} D_\beta G_{jk}(\cdot,y) D_\alpha \phi^i = \phi^k(y),\quad
\forall \vec \phi=(\phi^1\ldots,\phi^m)^T\in C^\infty_c(\Omega).
\end{equation}
\item
$\vec G(\cdot,y) \in Y^{1,2}(\Omega\setminus B_r(y))$ for all $y\in\Omega$ and $r>0$ and $\vec G(\cdot,y)$ vanishes on $\partial\Omega$.
\item
For any $\vec f=(f^1,\ldots, f^m)^T \in L^\infty_c(\Omega)$, the function $\vec u$ given by
\begin{equation}        \label{eq2.9x}
\vec u(x):=\int_\Omega \vec G(y,x) \vec f(y)\,dy
\end{equation}
belongs to $Y^{1,2}_0(\Omega)$ and satisfies $\Lt \vec u=\vec f$ in the sense that
\begin{equation}        \label{eq2.7i}
\int_\Omega A^{\alpha\beta}_{ij} D_\alpha u^i D_\beta \phi^j = \int_\Omega f^j \phi^j, \quad\forall\vec \phi=(\phi^1,\ldots,\phi^m)^T\in C^\infty_c(\Omega).
\end{equation}
\end{enumerate}
\end{definition}

We note that part iii) of the above definition gives the uniqueness of a Green's matrix; see  \cite{HK07}.
We shall hereafter say that $\vec G(x,y)$ is ``the'' Green's matrix of $L$ in $\Omega$ if it satisfies all the above properties.

\mysection{Main results}						\label{main}
The following condition, which hereafter shall be referred to as \eqref{LB}, is used to obtain pointwise bounds for the Green's matrix $\vec G(x,y)$ of $L$ in $\Omega$.

\begin{CLB}
There exist $R_{max}\in (0,\infty]$ and $N_0>0$ such that for all $x\in\Omega$,  $R\in (0,R_{max})$, and $\vec f \in L^\infty(\Omega_R(x))$, the following holds:
If $\vec u \in W^{1,2}(\Omega_R(x))$ is a weak solution of either $L \vec u=\vec f$ or $\Lt \vec u=\vec f$ in $\Omega_R(x)$ and vanishes on $\varSigma_R(x)$, then we have
\begin{equation*}
\tag{LB}\label{LB}
\norm{\vec u}_{L^\infty(\Omega_{R/2})} \le N_0\left(R^{-n/2} \norm{\vec u}_{L^2(\Omega_R)}+ R^2 \norm{\vec f}_{L^\infty(\Omega_R)} \right);\quad \Omega_R:=\Omega_R(x).
\end{equation*}
\end{CLB}

\begin{notation}
We use the convention $c\cdot\infty=\infty$ for $c>0$ and $1/\infty=0$.
\end{notation}

\begin{remark}								\label{rmk3.3b}
By using a standard covering argument, it is easy to see that the constant $R_{max}$ in the condition \eqref{LB} is interchangeable with $c\cdot R_{max}$ for any fixed $c \in (0,\infty)$, possibly at the cost of changing the constant $N_0$ in the condition \eqref{LB} by $K\cdot N_0$, where $K=K(n,c)>0$.
\end{remark}

\begin{theorem}							\label{thm1}
Assume the condition \eqref{LB} and let $\vec G(x,y)$ be the Green's  matrix of $L$ in $\Omega$.
Then we have
\begin{equation}							\label{eq2.17d}
\abs{\vec G(x,y)} \leq C \abs{x-y}^{2-n}\quad \text{for all }\, x,y\in\Omega\; \text{ satisfying }\;0<\abs{x-y}<R_{max},
\end{equation}
where $C=C(n,m,\nu, N_0)$.
\end{theorem}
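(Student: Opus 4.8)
The plan is to bound $\vec G(\cdot,y)$ on a ball centered at $x$ of radius comparable to $\abs{x-y}$, first in $L^2$ by a duality argument against the adjoint problem and then in $L^\infty$ via Condition~\eqref{LB}. Fix $x,y\in\Omega$ with $d:=\abs{x-y}\in(0,R_{max})$; it is enough to prove $\norm{\vec G(\cdot,y)}_{L^\infty(\Omega_{d/8}(x))}\le C\,d^{\,2-n}$ with $C=C(n,m,\nu,N_0)$, which yields \eqref{eq2.17d} (read for the appropriate representative of $\vec G$, equivalently for a.e.\ pair). Since Condition~\eqref{LB} trades an $L^2$ average at scale $d$ for an $L^\infty$ bound at the cost of a factor $d^{-n/2}$, and $d^{2-n}=d^{-n/2}\cdot d^{\,2-n/2}$, the natural intermediate target is $\norm{\vec G(\cdot,y)}_{L^2(\Omega_{d/4}(x))}\le C\,d^{\,2-n/2}$.

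For that $L^2$ bound I would proceed by duality. Take $\vec f\in L^\infty(\Omega_{d/4}(x))$, extended by zero, and let $\vec u(\cdot)=\int_\Omega\vec G(w,\cdot)\vec f(w)\,dw$ be the function provided by part~iii) of Definition~\ref{def2}: it lies in $Y^{1,2}_0(\Omega)$ and is a weak solution of $\Lt\vec u=\vec f$. Testing \eqref{eq2.7i} with $\vec\phi=\vec u$ (admissible since $Y^{1,2}_0(\Omega)$ is the closure of $C^\infty_c(\Omega)$ and both sides are $Y^{1,2}$-continuous in $\vec\phi$), using the ellipticity~\eqref{eqP-02}, H\"older's inequality, and the Sobolev inequality~\eqref{eqP-14}, and then converting the $L^{2n/(n+2)}$ norm of $\vec f$ to an $L^2$ norm by H\"older on $\Omega_{d/4}(x)$ (whose measure is $\le Cd^n$), one obtains $\norm{\vec u}_{L^{2n/(n-2)}(\Omega)}\le C(n,\nu)\,d\,\norm{\vec f}_{L^2(\Omega)}$. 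Because $\supp\vec f\subset\overline{B_{d/4}(x)}$ is disjoint from $B_{d/4}(y)$ (as $\abs{x-y}=d$), $\vec u$ solves $\Lt\vec u=0$ in $\Omega_{d/4}(y)$, belongs to $W^{1,2}(\Omega_{d/4}(y))$, and vanishes on $\varSigma_{d/4}(y)$; hence Condition~\eqref{LB} at the point $y$ with radius $d/4<R_{max}$, followed by H\"older on $\Omega_{d/4}(y)$, gives
\[
\abs{\vec u(y)}\le\norm{\vec u}_{L^\infty(\Omega_{d/8}(y))}\le N_0(d/4)^{-n/2}\norm{\vec u}_{L^2(\Omega_{d/4}(y))}\le C\,d^{\,2-n/2}\norm{\vec f}_{L^2(\Omega)},\qquad C=C(n,\nu,N_0).
\]
Now the defining formula for $\vec u$ reads $u^i(y)=\int_{\Omega_{d/4}(x)}\sum_j G_{ij}(w,y)f^j(w)\,dw$, i.e.\ $u^i(y)$ is the $L^2(\Omega_{d/4}(x))$ pairing of $\vec f$ with the $i$-th row of $w\mapsto\vec G(w,y)$. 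Taking the supremum over $\vec f$ with $\norm{\vec f}_{L^2}\le1$ bounds the $L^2(\Omega_{d/4}(x))$ norm of each row of $\vec G(\cdot,y)$ by $C\,d^{\,2-n/2}$, and summing the squares over $i=1,\dots,m$ gives $\norm{\vec G(\cdot,y)}_{L^2(\Omega_{d/4}(x))}\le C\,d^{\,2-n/2}$ with $C=C(n,m,\nu,N_0)$. (This makes sense because, by part~ii) of Definition~\ref{def2}, $\vec G(\cdot,y)\in Y^{1,2}(\Omega\setminus B_{d/8}(y))$, and $\Omega_{d/4}(x)\subset\Omega\setminus B_{d/8}(y)$ is bounded, so $\vec G(\cdot,y)\in W^{1,2}(\Omega_{d/4}(x))\subset L^2$ by Remark~\ref{rmk2.4q}.)

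It remains to run Condition~\eqref{LB} once more, now near $x$: by parts~i)--ii) of Definition~\ref{def2}, $\vec G(\cdot,y)$ solves $L\vec G(\cdot,y)=0$ in $\Omega_{d/4}(x)$ (since $y\notin B_{d/4}(x)$), lies in $W^{1,2}(\Omega_{d/4}(x))$, and vanishes on $\varSigma_{d/4}(x)$, whence (applying \eqref{LB} to the vector-valued solutions forming the columns of $\vec G(\cdot,y)$)
\[
\norm{\vec G(\cdot,y)}_{L^\infty(\Omega_{d/8}(x))}\le N_0(d/4)^{-n/2}\norm{\vec G(\cdot,y)}_{L^2(\Omega_{d/4}(x))}\le C\,d^{\,2-n},
\]
which is exactly \eqref{eq2.17d}.

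The only genuinely delicate ingredient is the reciprocity used in the duality step — identifying the $L^2$ pairing of a test datum $\vec f$ with $\vec G(\cdot,y)$ as a solution of the adjoint problem evaluated at $y$ — and here it is supplied for free by part~iii) of Definition~\ref{def2}, so one never has to regularize $\vec G(\cdot,y)$ into an admissible test function nor pass through approximate Green's matrices; everything else is routine energy/Sobolev/H\"older bookkeeping with exponents forced by scaling. The one soft point is upgrading the essential-supremum bound on $\Omega_{d/8}(x)$ to the pointwise statement in \eqref{eq2.17d}; this is harmless, and becomes a non-issue under the stronger local H\"older-type hypotheses considered later in the paper, which render $\vec G(\cdot,y)$ continuous away from $y$.
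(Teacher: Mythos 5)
Your proof is correct, and it follows the same overall skeleton as the paper: test the adjoint equation against $\vec f$, apply Condition~\eqref{LB} to the resulting dual solution $\vec u$, dualize to get an integral bound on $\vec G(\cdot,y)$, then apply \eqref{LB} a second time to $\vec G(\cdot,y)$ itself as a solution of $L\vec G(\cdot,y)=0$ away from the pole. The one genuine simplification you made is the choice of duality exponent: the paper works with $L^1$--$L^\infty$ duality, first showing $\norm{\vec u}_{L^\infty(\Omega_{R/2}(y))}\le CR^2\norm{\vec f}_{L^\infty}$ for $\vec f$ supported in $\Omega_R(y)$ and concluding $\norm{\vec G(\cdot,y)}_{L^1(\Omega_R(y))}\le CR^2$; to convert this $L^1$ information into a pointwise bound on $\vec G$, the paper then needs to upgrade \eqref{LB} from its $L^2$ form to an $L^p$ form for all $p>0$ via a Moser-type interpolation/iteration argument (the step leading to \eqref{eq2.8r}, citing Giaquinta). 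By running the duality in $L^2$ instead — supporting $\vec f$ near $x$, pushing the $L^2$ norm of $\vec f$ through the energy estimate and \eqref{LB} at $y$, and reading off an $L^2(\Omega_{d/4}(x))$ bound on $\vec G(\cdot,y)$ — you can feed that bound directly into \eqref{LB} in the form in which it is stated, avoiding \eqref{eq2.8r} entirely. The trade-offs are essentially cosmetic: the paper's version centers the $L^1$ estimate at $y$ and uses the inclusion $\Omega_R(x)\subset\Omega_{3R}(y)$, while yours centers the $L^2$ estimate at $x$ and uses disjointness of $B_{d/4}(x)$ and $B_{d/4}(y)$ to see that $\Lt\vec u=0$ near $y$; both work for the same geometric reason.

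One remark on the ``soft point'' you flag honestly at the end (interpreting the $\esssup$ bound at $y$ so as to evaluate $\vec u(y)$, and likewise interpreting \eqref{eq2.17d} pointwise): this is exactly the same issue that is present, but unremarked, in the paper's passage from \eqref{eq3.2v} to \eqref{eq3.3y}. Under \eqref{LB} alone the Green's matrix need not be continuous off the diagonal, so \eqref{eq2.17d} is most safely read in an a.e.\ or essential sense, or for the specific representative provided by the construction in \cite{HK07} once \eqref{IH} is added in Theorem~\ref{thm1b}. You are right that this disappears under the hypotheses used elsewhere in the paper, and it is to your credit that you noticed and stated it explicitly.
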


The following condition \eqref{IH} combines two conditions that appeared as the properties $\mathrm{(H)}$ and $\mathrm{(H)}_{loc}$ in \cite{HK07}. It means that weak solutions of $L\vec u=0$ and $\Lt \vec u=0$ in $B\subset \Omega$ are locally H\"older continuous  in $B$ with an exponent $\mu_0$.

\begin{notation}
We denote $a\wedge b=\min(a,b)$ and $d_x=\dist(x,\partial \Omega)$.
\end{notation}

\begin{CIH}
There exist $\mu_0\in (0,1]$, $R_c\in (0,\infty]$, and $N_1>0$ such that for all $x\in\Omega$ and $R \in(0, d_x\wedge R_c)$, the following holds:
If $\vec u\in W^{1,2}(B_R(x))$ is a weak solution of either $L\vec u=0$ or $\Lt \vec u=0$ in $B_R(x)$,  then we have
\begin{equation}
\tag{IH}\label{IH}
\int_{B_r(x)} \abs{D \vec u}^2 \le N_1\left(\frac{r}{s}\right)^{n-2+2\mu_0} \int_{B_s(x)} \abs{D \vec u}^2 \quad \text{for }\, 0<r<s\le R.
\end{equation}
\end{CIH}

\begin{theorem}     \label{thm1b}
Assume the conditions \eqref{IH} and \eqref{LB}. Then, the Green's matrix $\vec G(x,y)$ of $L$ in $\Omega$ exists and satisfies the estimate \eqref{eq2.17d} with $C=C(n,m,\nu, N_0)$.
Also, the Green's matrix ${}^t\vec G(x,y)$ of the adjoint operator $\Lt$ in $\Omega$ exists and we have
\begin{equation}        \label{eq2.20f}
\vec G(x,y)={}^t\vec G(y,x)^T,\quad \forall x,y\in\Omega,\quad x\neq y.
\end{equation}
Moreover, the Green's matrix $\vec G(x,y)$ satisfies the estimate
\begin{equation}        \label{eq3.9c}
\norm{\vec G(\cdot,y)}_{Y^{1,2}(\Omega\setminus B_r(y))}  \le C r^{(2-n)/2},\quad \forall r \in (0,R_{max}),\quad \forall y\in\Omega,
\end{equation}
where $C=C(n,m,\nu, N_0)$.
\end{theorem}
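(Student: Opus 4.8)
The plan is to assemble the statement from three inputs: the existence theory of \cite{HK07}, Theorem~\ref{thm1}, and a Caccioppoli-type energy estimate. Since the condition \eqref{IH} is exactly the combination of the properties $\mathrm{(H)}$ and $\mathrm{(H)}_{loc}$ used in \cite{HK07} to construct Green's matrices in an arbitrary domain, and since \eqref{IH} is phrased symmetrically for $L$ and $\Lt$, I would first invoke \cite{HK07} to conclude that the Green's matrix $\vec G(x,y)$ of $L$ in $\Omega$ and the Green's matrix ${}^t\vec G(x,y)$ of $\Lt$ in $\Omega$ both exist in the sense of Definition~\ref{def2}, are unique by part iii) of that definition, and satisfy the symmetry relation \eqref{eq2.20f}. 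In particular $\vec G(\cdot,y)\in Y^{1,2}(\Omega\setminus B_r(y))$ for every $r>0$, vanishes on $\partial\Omega$, and is a weak solution of $L\vec G(\cdot,y)=0$ away from the pole $y$ (and likewise for ${}^t\vec G$, since for test functions supported off $y$ the right-hand side of \eqref{eq2.6r} vanishes).

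With existence in hand, the pointwise bound \eqref{eq2.17d} for $\vec G$ is immediate from Theorem~\ref{thm1}, which uses only \eqref{LB}; the same estimate holds for ${}^t\vec G$ because $\Lt$ also satisfies \eqref{LB}. Via \eqref{eq2.20f} either bound implies the other, so it suffices to record \eqref{eq2.17d} once.

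It then remains to prove the energy bound \eqref{eq3.9c}. Fix $y\in\Omega$ and $r\in(0,R_{max})$, and for a fixed column index $k$ write $\vec g=(G_{1k}(\cdot,y),\dots,G_{mk}(\cdot,y))^T$. Choose $\eta\in C^\infty(\bR^n)$ with $\eta\equiv 0$ on $B_{r/2}(y)$, $\eta\equiv 1$ off $B_r(y)$, and $\abs{D\eta}\le C/r$. Then $\eta^2\vec g$ lies in $Y^{1,2}_0(\Omega)$ and is supported away from $y$, hence is an admissible test function in \eqref{eq2.6r} (by density, using that $\vec g$ vanishes on $\partial\Omega$ and belongs to $Y^{1,2}(\Omega\setminus B_{r/2}(y))$), and the right-hand side of \eqref{eq2.6r} vanishes. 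Expanding $D_\alpha(\eta^2 g_i)$, applying the ellipticity \eqref{eqP-02} to the term in which both derivatives fall on $\vec g$, the bound \eqref{eqP-03} with Young's inequality to the cross term, and absorbing, I obtain the Caccioppoli inequality
\[
\int_{\Omega}\eta^2\abs{D\vec g}^2 \le \frac{C}{r^2}\int_{B_r(y)\setminus B_{r/2}(y)}\abs{\vec g}^2,\qquad C=C(n,m,\nu).
\]
On the annulus $B_r(y)\setminus B_{r/2}(y)$ one has $\abs{x-y}<r<R_{max}$, so \eqref{eq2.17d} gives $\abs{\vec g(x)}\le C\abs{x-y}^{2-n}\le C r^{2-n}$; since that annulus has volume $\le Cr^n$, the right-hand side above is $\le C r^{2-n}$, whence $\norm{D\vec g}_{L^2(\Omega\setminus B_r(y))}^2\le C r^{2-n}$. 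Applying the Sobolev inequality \eqref{eqP-14} to $\eta\vec g\in Y^{1,2}_0(\Omega)$ and using $\abs{D(\eta\vec g)}\le \eta\abs{D\vec g}+\abs{D\eta}\abs{\vec g}$ together with the two estimates just obtained yields $\norm{\vec g}_{L^{2n/(n-2)}(\Omega\setminus B_r(y))}\le C r^{(2-n)/2}$. Summing over the $m$ columns produces \eqref{eq3.9c} with $C=C(n,m,\nu,N_0)$, the dependence on $N_0$ entering only through the constant in \eqref{eq2.17d}.

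The main obstacle here is essentially bookkeeping: one must check carefully that $\eta^2\vec g$ is a legitimate test function in \eqref{eq2.6r}, i.e. that it lies in $Y^{1,2}_0(\Omega)$ and is the $Y^{1,2}$-limit of functions in $C^\infty_c(\Omega)$ supported away from $y$ (so that $D\vec G(\cdot,y)\in L^2$ on their supports and the passage to the limit is valid); and one must track the range of $r$ so that \eqref{eq2.17d} is available precisely on the annulus carrying the Caccioppoli right-hand side, which is what forces $r<R_{max}$. With the convention $1/\infty=0$ the case $R_{max}=\infty$ requires no separate argument. Everything else — existence, uniqueness, the symmetry \eqref{eq2.20f}, and the pointwise bound — is imported directly from \cite{HK07} and Theorem~\ref{thm1}.
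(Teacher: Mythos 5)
Your proof is correct, and the first two components (existence, symmetry, and the pointwise bound via \cite{HK07} and Theorem~\ref{thm1}) match the paper exactly. Where you depart is in establishing \eqref{eq3.9c}: you test the weak form \eqref{eq2.6r} directly with the cutoff function $\eta^2\vec G(\cdot,y)$, invoking a density argument to extend \eqref{eq2.6r} from $C^\infty_c(\Omega)$ to such test functions; this requires interpreting ``$\vec G(\cdot,y)$ vanishes on $\partial\Omega$'' so that any smooth cutoff of $\vec G(\cdot,y)$ that kills the pole lies in $Y^{1,2}_0(\Omega)$ and is approximable by $C^\infty_c(\Omega)$ functions supported off a neighborhood of $y$, and one should further arrange the approximating sequence so that its supports stay away from $y$, since $D\vec G(\cdot,y)$ is only square-integrable away from the pole. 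That interpretation is the one used in \cite{HK07}, so your route is legitimate, but you correctly identify it as the delicate step. The paper sidesteps this entirely by working at the level of the averaged Green's matrices $\vec G^\rho(\cdot,y)$, whose defining identity \eqref{eqG-04} holds natively for \emph{all} $\vec u\in Y^{1,2}_0(\Omega)$ with no density argument needed; it then re-derives a $\rho$-uniform pointwise bound \eqref{eq3.8a} (you instead just cite \eqref{eq2.17d}), runs the Caccioppoli step on $\vec G^\rho$, handles the complementary range $\rho\ge r/6$ via the a priori energy bound \eqref{eqG-02}, and passes to the limit using the weak convergence $\vec G^{\rho_\mu}(\cdot,y)\rightharpoonup\vec G(\cdot,y)$. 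In short: your approach is shorter and avoids both the re-derivation of the pointwise bound for $\vec G^\rho$ and the $\rho$-case split, at the cost of a density argument; the paper's approach is longer but stays within a weak formulation valid for all of $Y^{1,2}_0(\Omega)$, where no such justification is needed. Both are sound.
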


The following theorem says that the converse of Theorem~\ref{thm1} is also true if we assume the condition \eqref{IH}.
\begin{theorem}         \label{thm1c}
Assume the condition \eqref{IH} and let $\vec G(x,y)$ be the Green's matrix of $L$ in $\Omega$.
Suppose there exists $R_{max}\in (0,\infty]$ such that for all $x,y\in \Omega$ satisfying $0<\abs{x-y}<R_{max}$, we have
\begin{equation}							\label{eq2.17dd}
\abs{\vec G(x,y)} \leq C_0 \abs{x-y}^{2-n}.
\end{equation}
Then the condition \eqref{LB} is satisfied with the same $R_{max}$ and $N_0=N_0(n,m,\nu, C_0)$.
\end{theorem}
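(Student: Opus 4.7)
The plan is to prove~\eqref{LB} in two stages: first reduce to the case $\vec f \equiv 0$ by subtracting a Green's-matrix particular solution, then handle the resulting homogeneous problem by a cutoff argument that transplants the data to an annular region where the pointwise Green's bound is directly usable. Only the $L$-case of~\eqref{LB} is discussed; the $\Lt$-case follows identically since $|{}^t\vec G(x,y)| = |\vec G(y,x)| \le C_0 |x-y|^{2-n}$ as well.

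For the reduction, I will fix $x_0, R, \vec f, \vec u$ as in~\eqref{LB} (shrinking $R$ by a fixed factor using Remark~\ref{rmk3.3b} to ensure $2R < R_{max}$) and set $\vec u_f(x) := \int_{\Omega_R(x_0)} \vec G(x,y)\vec f(y)\,dy$. The $L$-companion of Def.~\ref{def2}(iii), obtained by pairing a Lax--Milgram solution of $L\vec u = \vec\phi$ with $\Lt$-test functions in Def.~\ref{def2}(iii), shows $\vec u_f \in Y^{1,2}_0(\Omega)$ and $L\vec u_f = \vec f\chi_{\Omega_R(x_0)}$; combined with~\eqref{eq2.17dd} this yields $|\vec u_f(x)| \le CR^2\|\vec f\|_{L^\infty(\Omega_R)}$ for $x \in \Omega_{R/2}(x_0)$. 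Because $\vec u_f \in Y^{1,2}_0(\Omega)$ vanishes on $\partial\Omega \supset \varSigma_R(x_0)$, the difference $\vec v := \vec u - \vec u_f$ will satisfy $L\vec v = 0$ in $\Omega_R(x_0)$ and $\vec v = 0$ on $\varSigma_R(x_0)$, reducing the task to
\[
\|\vec v\|_{L^\infty(\Omega_{R/2}(x_0))} \le CR^{-n/2}\|\vec v\|_{L^2(\Omega_R(x_0))}.
\]

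For the homogeneous part, I will pick a cutoff $\eta \in C^\infty_c(B_R(x_0))$ with $\eta \equiv 1$ on $B_{3R/4}(x_0)$ and $|D\eta| \le C/R$, set $\vec w := \eta\vec v \in Y^{1,2}_0(\Omega)$ (after zero extension), and compute $L\vec w = \vec g - \dv\vec H$ where $g^i = -A^{\alpha\beta}_{ij}D_\alpha\eta\,D_\beta v^j$ and $H^\alpha_i = A^{\alpha\beta}_{ij}v^j D_\beta\eta$ are both supported in the annulus $\mathcal{A} := B_R(x_0) \setminus B_{3R/4}(x_0)$. Splitting $\vec w = \vec w_1 + \vec w_2$ in $Y^{1,2}_0(\Omega)$ with $L\vec w_1 = \vec g$ and $L\vec w_2 = -\dv\vec H$, each $\vec w_i$ is $L$-harmonic in $B_{3R/4}(x_0)$ and hence H\"older-continuous at $x \in \Omega_{R/2}(x_0)$ by~\eqref{IH}. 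Approximating $\vec g$ and $\vec H$ by smooth compactly-supported data in $L^2$ and using the Green's-matrix representation (with an integration by parts in $y$ for the $\vec H$-piece) will give
\[
v^k(x) = \int_{\mathcal{A}} G_{kj}(x,y)g^j(y)\,dy + \int_{\mathcal{A}} D_{y_\alpha}G_{kj}(x,y)H^\alpha_j(y)\,dy.
\]
For $x \in \Omega_{R/2}(x_0)$ and $y \in \mathcal{A}$ I have $|x-y| \ge R/4$, so~\eqref{eq2.17dd} gives $|\vec G(x,\cdot)| \le CR^{2-n}$ on $\mathcal{A}$, and Caccioppoli applied to $\vec G(x,\cdot)$ (a weak $\Lt$-solution in $y$ on $\Omega\setminus\{x\}$, obtained from Def.~\ref{def2}(iii) by a Fubini interchange) upgrades this to $\|D_y\vec G(x,\cdot)\|_{L^2(\mathcal{A})} \le CR^{1-n/2}$. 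A standard Caccioppoli on $\vec v$ gives $\|D\vec v\|_{L^2(\Omega_{7R/8}(x_0))} \le CR^{-1}\|\vec v\|_{L^2(\Omega_R(x_0))}$, whence $\|\vec g\|_{L^2(\mathcal{A})} \le CR^{-2}\|\vec v\|_{L^2(\Omega_R)}$ and $\|\vec H\|_{L^2(\mathcal{A})} \le CR^{-1}\|\vec v\|_{L^2(\Omega_R)}$; Cauchy--Schwarz in each integral then delivers the desired pointwise bound, and reassembling $\vec u = \vec v + \vec u_f$ yields~\eqref{LB}.

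\textbf{Main obstacle.} The delicate technical point is justifying the representation of $\vec w_2$: its source $-\dv\vec H$ lies in $H^{-1}_c$ rather than $L^\infty_c$, so Def.~\ref{def2}(iii) does not apply directly. The plan is to approximate $\vec H$ by $\vec H_n \in C^\infty_c(\mathcal{A})$ (making $-\dv\vec H_n \in C^\infty_c$), apply the representation for each $n$, integrate by parts in $y$, and pass to the limit; the pointwise-at-$x$ convergence of the approximants is secured by~\eqref{IH}, and the $L^2$-convergence of the integrals is secured by the $D_y\vec G$ estimate. The $y$-integration by parts relies in turn on $\vec G(x,\cdot)$ being a weak $\Lt$-solution in $y$ on $\Omega\setminus\{x\}$, a fact extracted from Def.~\ref{def2}(iii) alone (without invoking Theorem~\ref{thm1b}, which would be circular since its hypotheses include the very condition~\eqref{LB} we are trying to establish).
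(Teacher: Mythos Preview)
Your approach is correct, and the core idea---localize $\vec u$ by a cutoff so that all ``sources'' live in an annulus at distance $\sim R$ from the evaluation point, then represent the localized function via the Green's matrix and invoke the pointwise bound~\eqref{eq2.17dd} together with Caccioppoli---matches the paper's. The execution differs, though. The paper avoids both your two-stage reduction (subtracting $\vec u_f$) and your approximation argument for the $H^{-1}$ data by working with the \emph{averaged} Green's matrix $\vec G^\rho(\cdot,y)\in Y^{1,2}_0(\Omega)$ from \cite{HK07}: one tests the defining identity for $\vec G^\rho$ with $\zeta\vec u$ and tests the equation $\Lt\vec u=\vec f$ with $\zeta\vec G^\rho(\cdot,y)$, then subtracts. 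This yields a single representation $\fint_{\Omega_\rho(y)}\zeta u^k=I_1+I_2+I_3$ in which your $\vec g$-, $\vec H$-, and $\vec u_f$-pieces all appear at once, and the limit $\rho\to 0$ is taken only at the end using the weak convergence $\vec G^{\rho_\mu}(\cdot,y)\rightharpoonup\vec G(\cdot,y)$ already established in \cite{HK07}. The averaging neatly sidesteps the issue you flag as the main obstacle: since $\vec G^\rho(\cdot,y)$ is globally in $Y^{1,2}_0(\Omega)$, no separate justification is needed for divergence-form right-hand sides. Your route is more self-contained (it never introduces $\vec G^\rho$) but pays in technical bookkeeping; a minor point along these lines is that your $\vec g$ is only in $L^2$, not $L^\infty_c$, so it too needs the same approximation treatment you outline for $\vec H$. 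Finally, your worry about circularity is misplaced: the existence of ${}^t\vec G$ and the identity $\vec G(x,y)={}^t\vec G(y,x)^T$ follow from~\eqref{IH} alone via \cite[Theorem~4.1, Eq.~(4.34)]{HK07}, as the paper's own proof notes at the outset; the parts of Theorem~\ref{thm1b} that actually require~\eqref{LB} are the quantitative bounds~\eqref{eq2.17d} and~\eqref{eq3.9c}, not the symmetry relation you need.
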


\begin{remark}							\label{rmk:P-03}
In fact, one can replace the condition \eqref{IH} in Theorem~\ref{thm1b} and Theorem~\ref{thm1c} by the following condition \eqref{eqP-09a}:
There exist $\mu_0\in (0,1]$, $R_c\in (0,\infty]$, and $C_0>0$ such that  if $\vec u\in W^{1,2}(B_R(x))$ is a weak solution of either $L\vec u=0$ or $\Lt \vec u=0$ in $B_R(x)$, where $x\in\Omega$ and $0<R<d_x\wedge R_c$, then we have
\begin{equation}						\label{eqP-09a}			\tag{IH${}^\prime$}
[\vec u]_{C^{\mu_0}(B_{R/2}(x))} \leq C_0 R^{-\mu_0}\left(\fint_{B_R(x)} \abs{\vec u}^2\right)^{1/2}.
\end{equation}
Here, $[\vec u]_{C^{\mu_0}(B_{R/2})}$ denotes the usual H\"older seminorm.
It is not hard to see that condition \eqref{IH} implies \eqref{eqP-09a} with $C_0=C_0(n,m,\nu,\mu_0,N_1)$ and  the same $\mu_0$ and $R_c$.
As a matter of fact, the conditions \eqref{IH} and \eqref{eqP-09a} are equivalent under our basic assumptions on the operator $L$; see \cite[Lemma~2.3]{HK07}.
However, the condition \eqref{eqP-09a} does not imply \eqref{IH}, for instance, in the presence of lower order terms in the operator $L$.
In this sense, \eqref{eqP-09a} is a weaker condition.
We point out that the properties (H) and (H)$_{loc}$ in \cite{HK07} can be replaced entirely by the condition \eqref{eqP-09a}, without affecting the conclusion of the main theorems in \cite{HK07}.
\end{remark}

The following condition, which hereafter shall be referred to as \eqref{LH}, is a combination of \eqref{IH} and another condition that appeared as the property $\mathrm{(BH)}$ in \cite{HK07}.

\begin{CLH}
There exist $\mu_0\in (0,1]$, $R_{max} \in (0,\infty]$, and  $N_1>0$ such that for all $x\in \Omega$ and $R \in (0,R_{max})$, the following holds:
If $\vec u\in W^{1,2}(\Omega_R(x))$ is a weak solution of either $L\vec u=0$ or $\Lt \vec u=0$ in $\Omega_R(x)$ and vanishes on $\varSigma_R(x)$, then we have
\begin{equation}
\tag{LH}\label{LH}
\int_{\Omega_r(x)} \abs{D \vec u}^2 \le N_1\left(\frac{r}{s}\right)^{n-2+2\mu_0} \int_{\Omega_s(x)} \abs{D \vec u}^2\quad \text{for }\, 0<r<s\le R.
\end{equation}
\end{CLH}

\begin{remark}								\label{rmk3.12}
In the condition \eqref{LH}, the constant $R_{max}$ is interchangeable with $c\cdot R_{max}$ for any fixed $c \in (0,\infty)$, possibly at the cost of changing the constant $N_1$ in the condition \eqref{LH} by $K\cdot N_1$, where $K=K(n,c)>0$.
\end{remark}

It will be shown in Appendix that \eqref{LH} implies \eqref{LB}.
Also, it is obvious that \eqref{LH} implies \eqref{IH}.
Therefore if \eqref{LH} is satisfied, then by Theorem~\ref{thm1b}, the Green's matrix $\vec G(x,y)$ of $L$ in $\Omega$ exists and satisfies the estimate \eqref{eq2.17d}. The following theorem asserts that in fact, in such a case, a better estimate for $\vec G(x,y)$ is available near the boundary $\partial\Omega$.

\begin{theorem}         \label{thm2}
Assume the condition \eqref{LH} and let $\vec G(x,y)$ be the Green's matrix of $L$ in $\Omega$.
Then for all $x,y\in \Omega$ satisfying $0<\abs{x-y}< R_{max}$, we have
\begin{equation}            \label{eq2.26s}
\abs{\vec G(x,y)}  \leq C \bigset{d_x\wedge \abs{x-y}}^{\mu_0} \bigset{d_y\wedge \abs{x-y}}^{\mu_0} \abs{x-y}^{2-n-2\mu_0},
\end{equation}
where $C=C(n,m,\nu, \mu_0,N_1)$.
If $R_{max}<\infty$ and $\Omega$ is bounded, then for all $x, y\in \Omega$ with $x\neq y$, we have the estimate \eqref{eq2.26s} with $C=C(n,m,\nu, \mu_0,N_1, R_{max}/\diam\Omega)$.
\end{theorem}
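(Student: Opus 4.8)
\emph{The plan.} I would run a boundary Campanato--Morrey argument for $\vec u:=\vec G(\cdot,y)$, using \eqref{LH} at all scales below $\abs{x-y}$ to upgrade the energy bound \eqref{eq3.9c} into a decay estimate near $\partial\Omega$, then symmetrize via \eqref{eq2.20f} and bootstrap once to capture the second factor. Since \eqref{LH} implies \eqref{IH} and (by the Appendix) \eqref{LB}, Theorem~\ref{thm1b} applies: $\vec G$ and ${}^t\vec G$ exist, \eqref{eq2.20f} holds, and \eqref{eq2.17d} and \eqref{eq3.9c} are at our disposal; moreover all hypotheses are symmetric under $L\leftrightarrow\Lt$. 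Write $R=\abs{x-y}$, $d_x=\dist(x,\partial\Omega)$, $d_y=\dist(y,\partial\Omega)$. If $d_x\wedge d_y\ge R/4$ then the right-hand side of \eqref{eq2.26s} is comparable to $R^{2-n}$ and \eqref{eq2.26s} reduces to \eqref{eq2.17d}; hence it suffices to prove the one-sided bound $\abs{\vec G(x,y)}\le C\{d_x\wedge R\}^{\mu_0}R^{2-n-\mu_0}$ for all $x,y$ with $0<R<R_{max}$, together with its $\Lt$-analogue (which by \eqref{eq2.20f} furnishes the same bound with $d_y$ in place of $d_x$), and then to combine the two when $d_x,d_y<R/4$.

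\emph{One-sided gain.} Assume $d_x<R/4$ and choose $\hat x\in\partial\Omega$ with $\abs{x-\hat x}=d_x$, so $\abs{z-y}\ge R/2$ for every $z\in B_{R/4}(\hat x)$. For each such $z$, $\vec u$ solves $L\vec u=0$ in $\Omega_{R/4}(z)$ and vanishes on $\varSigma_{R/4}(z)$; applying \eqref{LH} at the center $z$ (legitimately, since $R/4<R_{max}$) with outer radius $R/4$, and then \eqref{eq3.9c} on $\Omega_{R/2}(\hat x)\subset\Omega\setminus B_{R/4}(y)$, gives the uniform Morrey decay
\[
\int_{\Omega_\rho(z)}\abs{D\vec u}^2\le C\,R^{2-n}\bigl(\rho/R\bigr)^{n-2+2\mu_0},\qquad z\in\Omega_{R/4}(\hat x),\quad 0<\rho<R/4 .
\]
Since $\vec u$ vanishes on $\varSigma_\rho(\hat x)$, a Poincar\'e inequality for functions vanishing on a boundary portion converts this into a decay estimate of the same exponent for $\int_{\Omega_\rho(\hat x)}\abs{\vec u}^2$; feeding $\rho\sim d_x$ into the elementary bound $\abs{\vec u(x)}\le C(\fint_{B_{d_x/2}(x)}\abs{\vec u}^2)^{1/2}+C[\vec u]_{C^{\mu_0}(B_{d_x/2}(x))}d_x^{\mu_0}$, with the H\"older seminorm controlled by \eqref{eqP-09a} and $B_{d_x/2}(x)\subset B_{2d_x}(\hat x)$, yields $\abs{\vec G(x,y)}\le C R^{2-n}(d_x/R)^{\mu_0}$. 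This is the one-sided bound (the case $d_x\ge R/4$ being immediate from \eqref{eq2.17d}).

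\emph{Bootstrap and bounded domains.} Suppose now $d_x,d_y<R/4$. By the $\Lt$-version of the one-sided bound and \eqref{eq2.20f}, $\abs{\vec G(w,y)}\le C\{d_y\wedge\abs{w-y}\}^{\mu_0}\abs{w-y}^{2-n-\mu_0}$ for all $w$; on $\Omega_{3R/4}(\hat x)$ this reads $\abs{\vec u}\le Cd_y^{\mu_0}R^{2-n-\mu_0}$, and a Caccioppoli inequality for $\vec u$ on $\Omega_{3R/4}(\hat x)$ (no regularity of $\partial\Omega$ needed) then improves the input of the previous step from $\int_{\Omega_{R/2}(\hat x)}\abs{D\vec u}^2\le CR^{2-n}$ to $\int_{\Omega_{R/2}(\hat x)}\abs{D\vec u}^2\le CR^{2-n}(d_y/R)^{2\mu_0}$. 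Repeating the argument of the previous paragraph verbatim with this sharper input produces $\abs{\vec G(x,y)}\le CR^{2-n}(d_x/R)^{\mu_0}(d_y/R)^{\mu_0}$, which is \eqref{eq2.26s}. For the last assertion, if $\Omega$ is bounded and $R_{max}<\infty$, the range $R\ge R_{max}$ is handled by running the whole argument at the fixed scale $R_{max}$ (after the harmless rescaling of $R_{max}$ in Remark~\ref{rmk3.12}), first obtaining $\abs{\vec G(x,y)}\le CR_{max}^{2-n}$ there from \eqref{eq3.9c} and \eqref{LB}; since $R_{max}\le R\le\diam\Omega$ this yields \eqref{eq2.26s} with the constant now also depending on $R_{max}/\diam\Omega$.

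\emph{Main obstacle.} The delicate point is the one-sided gain: one must keep the pole $y$ outside every ball on which \eqref{LH} or \eqref{eq3.9c} is invoked (this dictates the $R/2$, $R/4$, $3R/4$ bookkeeping above), and --- more substantively --- one needs the Poincar\'e and Caccioppoli inequalities for $Y^{1,2}$-solutions vanishing on $\varSigma_\rho(\hat x)$ in a form uniform in $\rho$ and in the center; this is the only step where a (mild) geometric property of $\Omega$, satisfied in all the settings of Section~\ref{sec:app}, actually enters. Granting these, the passage from the Morrey decay of $D\vec u$ to the pointwise bound for $\vec u$, the symmetrization via \eqref{eq2.20f}, and the single bootstrap are routine.
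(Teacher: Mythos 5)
Your outline (one-sided gain via Morrey decay, symmetrize via \eqref{eq2.20f}, one bootstrap, then treat $\abs{x-y}\ge R_{max}$ separately in bounded domains) matches the paper's proof, and the bootstrap and the bounded-domain step are essentially the paper's argument. But there is a genuine gap in the one-sided step, and it is precisely at the point you flag as the ``main obstacle.''

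You convert the Morrey decay of $\int_{\Omega_\rho(\hat x)}\abs{D\vec u}^2$ into a decay of $\int_{\Omega_\rho(\hat x)}\abs{\vec u}^2$ by invoking ``a Poincar\'e inequality for functions vanishing on a boundary portion,'' uniform in $\rho$ and in the center, and you acknowledge that this is ``the only step where a (mild) geometric property of $\Omega$ actually enters.'' That is exactly the problem: such an inequality requires a quantitative measure-density condition like $\abs{B_\rho(\hat x)\setminus\Omega}\gtrsim\abs{B_\rho(\hat x)}$ (cf.\ Condition~\eqref{S}), which Theorem~\ref{thm2} does not assume and which does not follow from \eqref{LH} alone --- \eqref{LH} is a statement about \emph{solutions}, not arbitrary $W^{1,2}$ functions vanishing on $\varSigma_\rho$. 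Moreover, the constant in \eqref{eq2.26s} depends only on $n,m,\nu,\mu_0,N_1$ (plus $R_{max}/\diam\Omega$ in the second part), so no such geometric constant is allowed to appear. As written, your argument proves a weaker statement under an extra, unstated hypothesis.

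The paper sidesteps this entirely (Lemma~\ref{lem3.6}): extend $\vec u$ by zero across $\varSigma_R$ to $\tilde{\vec u}\in W^{1,2}(B_R(x))$, apply the \emph{standard} Poincar\'e inequality on full balls $B_r(z)\subset B_{R/2}(x)$ to the oscillation $\fint_{B_r(z)}\abs{\tilde{\vec u}-\tilde{\vec u}_r}^2$ together with the Morrey decay of $D\tilde{\vec u}=\chi_\Omega D\vec u$, and read off a Campanato bound and hence $[\tilde{\vec u}]_{C^{\mu_0}(B_{R/2}(x))}\le CR^{1-n/2-\mu_0}\norm{D\vec u}_{L^2(\Omega_R(x))}$. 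Then $\abs{\vec u(x)}=\abs{\tilde{\vec u}(x)-\tilde{\vec u}(x')}\le [\tilde{\vec u}]_{C^{\mu_0}}\,\abs{x-x'}^{\mu_0}$ for any $x'\in B_{R/2}(x)\setminus\Omega$, and letting $\abs{x-x'}\downarrow d_x$ gives the one-sided gain with no boundary Poincar\'e, no Condition~\eqref{S}, and no extra constant. If you replace your $L^2$-decay-of-$\vec u$ step with this zero-extension/Campanato argument, the rest of your proof goes through and the dependence of the constants is exactly as claimed in the theorem.
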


\begin{remark}							\label{rmk3.13a}
It will be clear from the proof of Theorem~\ref{thm2} (see \eqref{eq5.42a} in \S\ref{ss5.4}) that  we in fact have the following estimate, which is slightly stronger than \eqref{eq2.26s}: For all $x,y \in \Omega$ with $x\neq y$, we have
\[
\abs{\vec G(x,y)}  \leq C \bigset{d_x\wedge \abs{x-y}\wedge R_{max}}^{\mu_0} \bigset{d_y\wedge \abs{x-y}\wedge R_{max}}^{\mu_0} \bigset{\abs{x-y}\wedge R_{max}}^{2-n-2\mu_0},
\]
where $C=C(n,m,\nu, \mu_0,N_1)$.
\end{remark}

\begin{remark}							\label{rmk:P-04}
The following condition \eqref{eqP-09} can be used as a substitute for \eqref{LH} in Theorem~\ref{thm2}:
There exist $\mu_0\in (0,1]$, $R_{max} \in (0,\infty]$, and  $C_0>0$ such that for all $x\in \Omega$ and $R \in (0,R_{max})$, the following holds:
If $\vec u\in W^{1,2}(\Omega_R(x))$ is a weak solution of either $L\vec u=0$ or $\Lt \vec u=0$ in $\Omega_R(x)$ and vanishes on $\varSigma_R(x)$, then we have
\begin{equation}						\label{eqP-09}			\tag{LH${}^\prime$}
[\tilde{\vec u}]_{C^{\mu_0}(B_{R/2}(x))}  \leq C_0 R^{-\mu_0}\left(\fint_{B_R(x)} \abs{\tilde{\vec u}}^2\right)^{1/2},\quad \text{where}\;\: \tilde{\vec u}=\chi_{\Omega_R(x)} \vec u.
\end{equation}
It is not hard to verify that the condition~\eqref{LH} implies \eqref{eqP-09} with $C_0=C_0(n,m,\nu,\mu_0,N_1)$ and  the same $\mu_0$ and $R_{max}$.
Also, it can be easily seen that the condition \eqref{eqP-09} implies both the conditions \eqref{LB} and  \eqref{eqP-09a}.
We note that  the condition \eqref{eqP-09} is, however,  weaker than condition \eqref{LH} in general.
From the proof of Theorem~\ref{thm2}, it should be clear that the conclusion of Theorem~\ref{thm2} remains the same under a weaker condition \eqref{eqP-09}.
\end{remark}

\mysection{Applications of Main Results}        \label{sec:app}
\subsection{Scalar case}
In the scalar case (i.e., $m=1$), both conditions \eqref{LB} and \eqref{IH} are satisfied with $R_{max}=\infty$ and $N_0=N_0(n,\nu)$  in any domain $\Omega$; see e.g., \cite{GT}.
Also, in the scalar case, the Green's matrices are nonnegative scalar functions; see \cite{GW}.
Therefore, the following corollary is an immediate consequence of Theorem~\ref{thm1b}.

\begin{corollary}           \label{cor1}
If $m=1$, then for any domain $\Omega \subset \bR^n$, the Green's function $G(x,y)$ of $L$ in $\Omega$ exists and satisfies
\begin{equation}            \label{eq3.11e}
G(x,y) \leq C \abs{x-y}^{2-n},\quad \forall x,y\in\Omega,\quad x\neq y,
\end{equation}
where $C=C(n,\nu)$ is a universal constant independent of $\Omega$.
\end{corollary}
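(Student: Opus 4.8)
The plan is to deduce Corollary~\ref{cor1} directly from Theorem~\ref{thm1b}, once we check that when $m=1$ the conditions \eqref{IH} and \eqref{LB} hold in an \emph{arbitrary} domain $\Omega$ with $R_c=R_{max}=\infty$ and with $N_0,N_1$ depending only on $n$ and $\nu$. The point is that both $L$ and $\Lt$ are scalar divergence-form operators with bounded measurable coefficients satisfying \eqref{eqP-02}--\eqref{eqP-03}, so the classical De Giorgi--Nash--Moser theory applies to each of them with constants depending only on $n$ and $\nu$; see \cite{GT}.

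First I would verify \eqref{IH}. A weak solution of $Lu=0$ (resp. $\Lt u=0$) in a ball $B_R(x)\subset\Omega$ is locally Hölder continuous with some exponent $\mu_0=\mu_0(n,\nu)\in(0,1]$; combining the interior Hölder estimate with Caccioppoli's inequality yields the Morrey--Campanato type gradient decay $\int_{B_r(x)}\abs{Du}^2\le N_1(r/s)^{n-2+2\mu_0}\int_{B_s(x)}\abs{Du}^2$ for $0<r<s\le R$, with $N_1=N_1(n,\nu)$. Since this argument applies to every ball contained in $\Omega$, condition \eqref{IH} holds with $R_c=\infty$. (Equivalently, one may invoke the interior De Giorgi--Nash estimate in the form \eqref{eqP-09a} and the equivalence noted in Remark~\ref{rmk:P-03}.)

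Next I would verify \eqref{LB}. Let $u\in W^{1,2}(\Omega_R(x))$ be a weak solution of $Lu=f$ (or $\Lt u=f$) in $\Omega_R(x)$ that vanishes on $\varSigma_R(x)$ in the sense defined in Section~\ref{pre}; then the extension $\bar u$ of $u$ by zero to $B_R(x)$ lies in $W^{1,2}(B_R(x))$, and $\bar u^{\pm}$ are nonnegative subsolutions in $B_R(x)$ of the equation with right-hand side controlled by $\abs f$. Applying the interior local maximum principle (e.g. \cite[Theorem~8.17]{GT}) on $B_R(x)$, rescaled to unit size, gives $\norm{\bar u}_{L^\infty(B_{R/2}(x))}\le C\bigl(R^{-n/2}\norm{\bar u}_{L^2(B_R(x))}+R^2\norm{f}_{L^\infty}\bigr)$ with $C=C(n,\nu)$; restricting to $\Omega_{R/2}(x)$ and using $\norm{\bar u}_{L^2(B_R(x))}=\norm{u}_{L^2(\Omega_R(x))}$ yields exactly \eqref{LB} with $N_0=N_0(n,\nu)$ and $R_{max}=\infty$.

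With \eqref{IH} and \eqref{LB} in hand (and $R_{max}=\infty$), Theorem~\ref{thm1b} shows that the Green's function $G(x,y)$ of $L$ in $\Omega$ exists and satisfies \eqref{eq2.17d}, that is, $\abs{G(x,y)}\le C\abs{x-y}^{2-n}$ for all $x\neq y$, with $C=C(n,1,\nu,N_0(n,\nu))=C(n,\nu)$, a constant independent of $\Omega$. Since in the scalar case $G(x,y)\ge 0$ (see \cite{GW}), we have $\abs{G(x,y)}=G(x,y)$ and \eqref{eq3.11e} follows. The only step requiring a little care is the boundary form of the local maximum principle used for \eqref{LB}, namely that the zero-extension of a $Y^{1,2}$-solution vanishing on $\varSigma_R(x)$ is genuinely a $W^{1,2}$ subsolution on the whole ball $B_R(x)$; this is routine once one unwinds the definition of vanishing on a portion of $\partial\Omega$, and everything else is a direct appeal to De Giorgi--Nash--Moser theory and to Theorem~\ref{thm1b}.
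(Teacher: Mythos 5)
Your proposal is correct and follows the same route as the paper: verify that when $m=1$ both conditions \eqref{IH} and \eqref{LB} hold in any domain with $R_{max}=\infty$ and constants depending only on $n$ and $\nu$, via De~Giorgi--Nash--Moser theory (the paper cites \cite{GT} for exactly this), then apply Theorem~\ref{thm1b} together with the nonnegativity of the scalar Green's function from \cite{GW}. The paper simply asserts these verifications as known, whereas you spell out the interior H\"older estimate for \eqref{IH} and the zero-extension/boundary local maximum principle argument for \eqref{LB}, but the argument is the same.
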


\begin{remark}
Corollary~\ref{cor1} is widely known (see e.g., \cite{GW, LSW}). However, it should be mentioned that unlike \cite{GW, LSW}, we do not need to assume that $\Omega$ is bounded.
\end{remark}

Also, in the scalar case, the condition \eqref{LH} is satisfied if $\Omega$ satisfies the condition \eqref{S}, the definition of which is given below.
In fact, if $L$ is a small $L^\infty$-perturbation of a diagonal system, then the condition \eqref{LH} is satisfied whenever $\Omega$ satisfies the condition \eqref{S}; see \S\ref{sec:AD} below.
\begin{CS}
There exist $\theta>0$ and $R_a\in (0,\infty]$ such that
\begin{equation}    \tag{S}\label{S}
\abs{B_R(x)\setminus\Omega}\ge \theta\abs{B_R(x)},\quad\forall x \in\partial\Omega,\quad \forall R\in (0, R_a).
\end{equation}
\end{CS}

The following corollary is then an easy consequence of Theorem~\ref{thm2}.

\begin{corollary}           \label{cor2}
Assume $m=1$ and let $G(x,y)$ be the Green's function of $L$ in $\Omega$, where $\Omega$ is a domain satisfying the condition \eqref{S}.
Then, $G(x,y)$ satisfies the estimate \eqref{eq3.11e}.
Moreover, for all $x,y\in \Omega$ satisfying $0<\abs{x-y}<R_a$, we have
\begin{equation}            \label{eq3.18yz}
G(x,y) \leq C\bigset{d_x\wedge \abs{x-y}}^{\mu_0} \bigset{d_y\wedge \abs{x-y}}^{\mu_0} \abs{x-y}^{2-n-2\mu_0},
\end{equation}
where $C=C(n,\nu,\theta)$ and $\mu_0=\mu_0(n,\nu,\theta)$. If $R_a<\infty$ and $\Omega$ is bounded, then for all $x, y\in \Omega$ with $x\neq y$, we have the estimate \eqref{eq3.18yz} with $C=C(n,\nu,\theta, R_a/ \diam \Omega)$.
\end{corollary}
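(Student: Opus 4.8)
The plan is to derive \eqref{eq3.18yz} from Theorem~\ref{thm2}; the global bound \eqref{eq3.11e} is already Corollary~\ref{cor1} (equivalently it comes from Theorem~\ref{thm1b}, using that for $m=1$ conditions \eqref{LB} and \eqref{IH} hold in \emph{any} domain with $R_{max}=R_c=\infty$ and $N_0=N_0(n,\nu)$, as recalled above). By Remark~\ref{rmk:P-04} it is enough to show that, when $m=1$, the measure--density hypothesis \eqref{S} forces condition \eqref{eqP-09} with $R_{max}=R_a$ and with $\mu_0=\mu_0(n,\nu,\theta)$ and $C_0=C_0(n,\nu,\theta)$. Granting this, Theorem~\ref{thm2} read through Remark~\ref{rmk:P-04} — with $m=1$ and $N_1=C_0$ — yields \eqref{eq2.26s}, i.e.\ \eqref{eq3.18yz}, for $0<\abs{x-y}<R_a$ with $C=C(n,\nu,\theta)$, and its last clause gives \eqref{eq3.18yz} for all $x\neq y$ with $C=C(n,\nu,\theta,R_a/\diam\Omega)$ when $R_a<\infty$ and $\Omega$ is bounded.

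So the real content is the implication $\eqref{S}\Rightarrow\eqref{eqP-09}$ in the scalar case (it is the $m=1$ instance of the $L^\infty$-perturbed diagonal result of \S\ref{sec:AD}). Here is how I would argue it. Fix $x_0\in\Omega$, $R\in(0,R_a)$, and a scalar $u\in W^{1,2}(\Omega_R(x_0))$ solving $Lu=0$ in $\Omega_R(x_0)$ and vanishing on $\varSigma_R(x_0)$ (the case $\Lt u=0$ is the same, the formal adjoint being again a scalar divergence-form operator with coefficients satisfying \eqref{eqP-02}--\eqref{eqP-03}). I would extend $u$ by zero, putting $\tilde u:=\chi_{\Omega_R(x_0)}u\in W^{1,2}(B_R(x_0))$; since $u$ vanishes on $\varSigma_R(x_0)$ there is no distributional contribution along that set, so $\tilde u^{+}$ and $\tilde u^{-}$ are nonnegative $W^{1,2}(B_R(x_0))$ subsolutions of $L$ across $\varSigma_R(x_0)$. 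The geometric input is that the zero set of $\tilde u$ occupies a fixed positive fraction of every pertinent ball: $\set{\tilde u=0}\cap B_\rho(z)\supseteq B_\rho(z)\setminus\Omega$, and if $d_{x_0}\le R$ one chooses $\hat x\in\partial\Omega$ with $\abs{\hat x-x_0}=d_{x_0}$ and uses $B_\rho(x_0)\supseteq B_{\rho-d_{x_0}}(\hat x)$ together with \eqref{S} to get $\bigabs{B_\rho(x_0)\setminus\Omega}\ge\theta\,2^{-n}\bigabs{B_\rho(x_0)}$ whenever $d_{x_0}\le\rho/2$ and $\rho<R_a$. Running the De Giorgi oscillation lemma for subsolutions at the dyadic scales $\rho\gtrsim d_{x_0}$, and splicing it with the interior oscillation decay at the scales $\rho\lesssim d_{x_0}$ (where $B_\rho(x_0)\subset\Omega$), one produces $\mu_0=\mu_0(n,\nu,\theta)\in(0,1)$ and $C=C(n,\nu,\theta)$ with
\[
\osc_{B_r(x_0)}\tilde u\le C\Bigl(\frac{r}{R}\Bigr)^{\mu_0}\Bigl(\fint_{B_R(x_0)}\abs{\tilde u}^2\Bigr)^{1/2},\qquad 0<r\le R/2 .
\]
Since $\tilde u$ is continuous and vanishes on $B_R(x_0)\setminus\Omega$, the supremum of $\abs{\tilde u}$ over any small ball meeting the complement is comparable to its oscillation there, and the displayed decay upgrades to $[\tilde u]_{C^{\mu_0}(B_{R/2}(x_0))}\le C_0 R^{-\mu_0}\bigl(\fint_{B_R(x_0)}\abs{\tilde u}^2\bigr)^{1/2}$, which is precisely \eqref{eqP-09}; all radii used are $<R<R_a$, so this holds with $R_{max}=R_a$.

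The only delicate step is the one isolated in the second paragraph: obtaining the boundary oscillation decay with constants depending solely on $n,\nu$ and the single density constant $\theta$ — that is, running the De Giorgi iteration for the zero-extension (which rests on the routine but slightly technical fact that $\tilde u^{\pm}$ are subsolutions across $\varSigma_R$) and splicing it with the interior estimate through the transition scale $\rho\sim d_{x_0}$. Once \eqref{eqP-09} is established, everything else is a bookkeeping of constants via $m=1$ and $N_1=C_0(n,\nu,\theta)$ in Theorem~\ref{thm2}, together with its bounded-domain clause for the last assertion.
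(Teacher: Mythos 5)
Your proposal follows the same route as the paper: reduce Corollary~\ref{cor2} to Theorem~\ref{thm2} by verifying a local boundary H\"older estimate for scalar solutions under the density condition \eqref{S}. The only cosmetic difference is that the paper verifies \eqref{LH} itself by citing \cite[Lemma~4.6]{HK07} (as the $m=1$ instance of the $L^\infty$-perturbed diagonal case), whereas you verify the weaker condition \eqref{eqP-09} directly via the standard De~Giorgi boundary oscillation argument (zero extension, $\tilde u^{\pm}$ subsolutions, density of the complement, splicing with interior decay at scale $\rho\sim d_{x_0}$) and then invoke Remark~\ref{rmk:P-04}. Since the scalar operator has no lower-order terms, \eqref{eqP-09} and \eqref{LH} are in fact equivalent here (Caccioppoli passes from one to the other), so the two routes coincide up to bookkeeping. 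Your sketch of the De~Giorgi step and the geometric inequality $\bigabs{B_\rho(x_0)\setminus\Omega}\ge\theta\,2^{-n}\abs{B_\rho(x_0)}$ for $d_{x_0}\le\rho/2$ is correct, and the concluding application of Theorem~\ref{thm2} (including its bounded-domain clause) matches the paper exactly.
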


\begin{example}
$\Omega=\bR_{+}^n$ satisfies the conditions \eqref{S} with $\theta=1/2$ and $R_a=\infty$.  
Therefore, Corollary~\ref{cor2} implies that for all $x, y\in \bR^n_+$ with $x\neq y$, we have
\[
G(x,y)\leq C\bigset{d_x\wedge \abs{x-y}}^{\mu_0} \bigset{d_y\wedge \abs{x-y}}^{\mu_0} \abs{x-y}^{2-n-2\mu_0},
\]
where $C=C(n,\nu)$ and $\mu_0=\mu_0(n,\nu)$.
\end{example}

\subsection{$L^\infty$-perturbation of diagonal systems}        \label{sec:AD}
Let $a^{\alpha\beta}(x)$ be scalar functions satisfying
\begin{equation}            \label{eqP-07}
a^{\alpha\beta}(x)\xi_\beta\xi_\alpha\ge \nu_0\bigabs{\vec \xi}^2,\quad\forall\xi\in\bR^n;\qquad \sum_{\alpha,\beta=1}^n \bigabs{a^{\alpha\beta}(x)}^2\le \nu_0^{-2},
\end{equation}
for all $x\in\bR^n$ with some constant $\nu_0\in (0,1]$.
Assume that $\Omega$ satisfies the condition \eqref{S} and let $A^{\alpha\beta}_{ij}(x)$ be the coefficients of the operator $L$.
We denote
\begin{equation}					\label{eqP-08w}
\mathscr{E}= \sup_{x\in \bR^{n}}\,\left\{\sum_{i,j=1}^m \sum_{\alpha,\beta =1}^n\, \Bigabs{A^{\alpha\beta}_{ij}(x)-a^{\alpha\beta}(x)\delta_{ij}}^2\right\}^{1/2},
\end{equation}
where $\delta_{ij}$ is the usual Kronecker delta symbol.
By \cite[Lemma 4.6]{HK07}, there exists a number $\mathscr{E}_0=\mathscr{E}_0(n,\nu_0,\theta)$ such that if $\mathscr{E}<\mathscr{E}_0$, then the condition \eqref{LH} is satisfied by $L$ in $\Omega$ with parameters $\mu_0=\mu_0(n,\nu_0, \theta)$, $N_1=N_1(n,m,\nu_0, \theta)$, and $R_{max}=R_a$.
Therefore, the following corollary is another easy consequence of Theorem~\ref{thm1b} and Theorem~\ref{thm2}.

\begin{corollary}           \label{cor2b}
Let $a^{\alpha\beta}(x)$ satisfy the condition \eqref{eqP-07}.
Assume that $\Omega$ satisfies the condition \eqref{S} and let $\mathscr{E}$ be defined as in \eqref{eqP-08w}, where $A^{\alpha\beta}_{ij}(x)$ are the coefficients of the operator $L$.
There exists $\mathscr{E}_0=\mathscr{E}_0(n,\nu_0,\theta)$ such that if $\mathscr{E} <\mathscr{E}_0$, then the Green's matrix $\vec G(x,y)$ of $L$ in $\Omega$ exists and for all $x,y\in \Omega$ satisfying $0<\abs{x-y}<R_a$, we have
\begin{equation}            \label{eq4.18y}
\abs{\vec G(x,y)} \leq C \bigset{d_x\wedge \abs{x-y}}^{\mu_0} \bigset{d_y\wedge \abs{x-y}}^{\mu_0} \abs{x-y}^{2-n-2\mu_0},
\end{equation}
where $C=C(n,m,\nu_0,\theta)$ and $\mu_0=\mu_0(n,\nu_0,\theta)$. If $R_a<\infty$ and $\Omega$ is bounded, then for all $x, y\in \Omega$ such that $x\neq y$, we have the estimate \eqref{eq4.18y} with $C=C(n,m, \nu_0,\theta, R_a/ \diam \Omega)$.
\end{corollary}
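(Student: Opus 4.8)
The strategy is to reduce the corollary entirely to Theorem~\ref{thm1b} and Theorem~\ref{thm2}, the only substantive input being the transfer of a boundary regularity estimate from the scalar comparison operator to the perturbed system. First I would invoke \cite[Lemma~4.6]{HK07}: its hypotheses are exactly what is in force here, namely that the diagonal comparison operator $-D_\alpha(a^{\alpha\beta}(x)\delta_{ij}D_\beta\,\cdot\,)$ has coefficients obeying \eqref{eqP-07} and that $\Omega$ satisfies \eqref{S}. The De Giorgi--Nash--Moser theory for the scalar operator $-D_\alpha(a^{\alpha\beta}D_\beta\,\cdot\,)$ supplies a boundary H\"older estimate on such $\Omega$, and Lemma~4.6 of \cite{HK07} bootstraps this, for $\mathscr{E}$ small, to the conclusion that $L$ (and $\Lt$) satisfies \eqref{LH} in $\Omega$. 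Thus one obtains a threshold $\mathscr{E}_0=\mathscr{E}_0(n,\nu_0,\theta)$ and, whenever $\mathscr{E}<\mathscr{E}_0$, parameters $\mu_0=\mu_0(n,\nu_0,\theta)$, $N_1=N_1(n,m,\nu_0,\theta)$, and $R_{max}=R_a$ for which \eqref{LH} holds.

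Next I would observe that \eqref{LH} implies \eqref{IH} trivially (take $s=R\le d_x$, so that $\Omega_r(x)=B_r(x)$ and $\Omega_s(x)=B_s(x)$), and that \eqref{LH} implies \eqref{LB} by the lemma proved in the Appendix. Hence both hypotheses of Theorem~\ref{thm1b} are satisfied with $R_{max}=R_a$, so the Green's matrix $\vec G(x,y)$ of $L$ in $\Omega$ exists and obeys \eqref{eq2.17d}. Then I would apply Theorem~\ref{thm2} directly: with \eqref{LH} in hand and $R_{max}=R_a$, estimate \eqref{eq2.26s} yields \eqref{eq4.18y} for all $x,y\in\Omega$ with $0<\abs{x-y}<R_a$, and since the $\mu_0$, $N_1$ produced by Lemma~4.6 depend only on $n,m,\nu_0,\theta$, the constants are $C=C(n,m,\nu_0,\theta)$ and $\mu_0=\mu_0(n,\nu_0,\theta)$ as claimed. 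For the remaining case $R_a<\infty$ with $\Omega$ bounded, the second assertion of Theorem~\ref{thm2} upgrades the bound to all $x\neq y$ at the cost of an additional dependence of $C$ on $R_a/\diam\Omega$, which is precisely \eqref{eq4.18y} in that regime.

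\textbf{Main obstacle.} There is no genuinely hard analytic step left inside this corollary: the real work is packaged into \cite[Lemma~4.6]{HK07} (the $L^\infty$-perturbation argument that carries the scalar boundary H\"older estimate over to the system) and into Theorem~\ref{thm2}, both of which we are entitled to quote. The only point demanding care is bookkeeping --- checking that the ``vanishes on $\varSigma_R(x)$'' hypothesis and the scale restriction $R\in(0,R_a)$ match on both sides, and that the parameters $\mu_0$, $N_1$, $R_{max}=R_a$ coming out of Lemma~4.6 are fed into Theorems~\ref{thm1b} and \ref{thm2} with exactly the stated dependencies, in particular that no dependence on $\diam\Omega$ creeps in unless $\Omega$ is assumed bounded with $R_a<\infty$.
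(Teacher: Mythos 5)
Your proof is correct and follows exactly the paper's route: the paragraph preceding Corollary~\ref{cor2b} invokes \cite[Lemma~4.6]{HK07} to deduce \eqref{LH} (with $R_{max}=R_a$ and the stated parameter dependencies) when $\mathscr{E}<\mathscr{E}_0$, and the corollary is then declared an easy consequence of Theorem~\ref{thm1b} and Theorem~\ref{thm2}. Your version merely spells out the intermediate implications \eqref{LH}$\Rightarrow$\eqref{IH} and \eqref{LH}$\Rightarrow$\eqref{LB} (the latter via Lemma~\ref{lem2.19}) and tracks the constants explicitly, which matches the intended argument.
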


\begin{example}     \label{ex4.11}
Let $\Omega=\{ x\in \bR^n: x_n>\varphi(x')\}$, where $x=(x',x_n)$ and  $\varphi:\bR^{n-1}\to \bR$ is a Lipschitz function with the Lipschitz constant $K$. Then $\Omega$ satisfies the condition \eqref{S} with $\theta=\theta(n,K)$ and $R_a=\infty$.
If $L$ is a small $L^\infty$-perturbation of a diagonal system in the sense of Corollary~\ref{cor2b}, then the Green's matrix $\vec G(x,y)$ of $L$ in $\Omega$ exists and we have
\[
\abs{\vec G(x,y)} \leq C \bigset{d_x\wedge \abs{x-y}}^{\mu_0} \bigset{d_y\wedge \abs{x-y}}^{\mu_0} \abs{x-y}^{2-n-2\mu_0},\quad \forall x, y \in\Omega,\quad x\neq y, 
\]
where $C=C(n,m,\nu_0,K)$ and $\mu_0=\mu_0(n,\nu_0,K)$.
\end{example}

\subsection{Systems with VMO coefficients}      \label{sec:VMO}
For a measurable function $f$ on $\bR^n$, we set
\[
\omega_\delta(f):=\sup_{x\in\bR^n}\,\sup_{r \leq \delta}\, \fint_{B_r(x)} \bigabs{f(y)-\bar{f}_{x,r}}\,dy, \quad\forall \delta>0;\quad \bar{f}_{x,r}=\fint_{B_r(x)}f.
\]
We say that $f$ belongs to VMO if $\lim_{\delta\to 0} \omega_\delta(f)=0$; see \cite{Sarason}.

If the coefficients $\vec A^{\alpha\beta}$ of the operator $L$ are functions in VMO satisfying \eqref{eqP-02}, \eqref{eqP-03} and if $\Omega$ is a bounded $C^1$ domain, then the condition \eqref{LH} is satisfied with parameters $\mu_0$, $N_1$, and $R_{max}$ depending on $\Omega$ and $\omega_\delta(\vec A^{\alpha\beta})$ as well as on $n, m, \nu$. Therefore, we have the following corollary of Theorem~\ref{thm2}.

\begin{corollary}           \label{cor3}
 Let $\Omega$ be a bounded $C^1$ domain. 
 Suppose the coefficients $\vec A^{\alpha\beta}$ of the operator $L$ belong to VMO and satisfy the conditions \eqref{eqP-02}, \eqref{eqP-03}.
Then for all $x, y\in \Omega$ with $x\neq y$, we have
\[
\abs{\vec G(x,y)}  \leq C \bigset{d_x\wedge \abs{x-y}}^{\mu_0} \bigset{d_y\wedge \abs{x-y}}^{\mu_0} \abs{x-y}^{2-n-2\mu_0},
\]
where $C$ and $\mu_0$ are constants depending on $n,m,\nu, \Omega$, and  $\omega_\delta(\vec A^{\alpha\beta})$.
\end{corollary}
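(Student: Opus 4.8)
The plan is to deduce the corollary from Theorem~\ref{thm2} by checking that the VMO hypothesis on the coefficients together with the $C^1$ regularity of $\Omega$ force condition \eqref{LH} to hold for $L$ (and, by the same argument applied to ${}^t\!\vec A^{\alpha\beta}$, for $\Lt$) in $\Omega$, with parameters $\mu_0\in(0,1)$, $N_1$, and some $R_{max}\in(0,\infty)$ depending only on $n,m,\nu$, the $C^1$ character of $\Omega$, and the VMO modulus $\omega_\delta(\vec A^{\alpha\beta})$. Granting \eqref{LH}, estimate \eqref{eq2.26s} is immediate from Theorem~\ref{thm2}; and since $\Omega$ is bounded, the finiteness of $R_{max}$ is no obstruction --- the second clause of Theorem~\ref{thm2} promotes \eqref{eq2.26s} to the stated bound valid for all $x\ne y$, with $C$ now also depending on $R_{max}/\diam\Omega$, i.e.\ on $\Omega$.

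To verify \eqref{LH} at a point $x_0\in\Omega$ one distinguishes two cases. If $B_R(x_0)\subset\Omega$ the desired decay is the purely interior estimate \eqref{IH}, which for systems with VMO principal coefficients is the classical Morrey--Campanato estimate obtained by freezing the coefficients and running the standard perturbation argument, valid with any exponent $\mu_0<1$ once one chooses $R_{max}$ small enough that $\omega_{R_{max}}(\vec A^{\alpha\beta})$ is below the smallness threshold of that argument (here is where VMO, not merely BMO, is used). The substantive case is $x_0\in\partial\Omega$. Near $x_0$, after a rotation, $\partial\Omega$ is the graph of a $C^1$ function $\varphi$, and $\Phi(x)=(x',x_n-\varphi(x'))$ is a bi-Lipschitz $C^1$ diffeomorphism flattening the boundary. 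Under $\Phi$ a weak solution $\vec u$ of $L\vec u=0$ in $\Omega_R(x_0)$ vanishing on $\varSigma_R(x_0)$ is carried to a weak solution $\tilde{\vec u}$ of a system $\tilde L\tilde{\vec u}=0$ in a half--ball $B^+$, vanishing on the flat face $\{x_n=0\}\cap B$. The new coefficients $\tilde{\vec A}^{\alpha\beta}$ are finite sums of products of $\vec A^{\gamma\delta}\circ\Phi^{-1}$ with entries of $D\Phi$ and $D\Phi^{-1}$; because $\varphi\in C^1$ these last factors are \emph{continuous}, VMO is preserved under composition with bi-Lipschitz maps and under multiplication by continuous functions, so $\tilde{\vec A}^{\alpha\beta}$ are again VMO and satisfy \eqref{eqP-02}--\eqref{eqP-03} with constants controlled by $\nu$ and $\norm{D\varphi}_{L^\infty}$. (Had $\Omega$ been merely Lipschitz, $D\varphi$ would only be bounded and $\tilde{\vec A}^{\alpha\beta}$ only BMO, which is why the $C^1$ hypothesis is essential.)

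On the half--ball I would invoke the flat--boundary Calder\'on--Zygmund estimate for the homogeneous Dirichlet problem for elliptic systems with VMO principal coefficients: extending $\tilde{\vec u}$ by $0$ across $\{x_n=0\}$ gives a $W^{1,2}$ function, and the estimate yields $\tilde{\vec u}\in W^{1,p}(B^+_{R/2})$ for every $p<\infty$, with $\norm{D\tilde{\vec u}}_{L^p(B^+_{\rho/2})}\le C\rho^{n/p-n/2}\norm{D\tilde{\vec u}}_{L^2(B^+_\rho)}$ on all scales $\rho\le R$. Choosing $p>n$, Morrey's embedding together with $\tilde{\vec u}=0$ on the flat face gives $[\tilde{\vec u}]_{C^{\mu_0}(B^+_{\rho/2})}\le C\rho^{-\mu_0}(\fint_{B^+_\rho}\abs{\tilde{\vec u}}^2)^{1/2}$ with $\mu_0=1-n/p$; transported back by $\Phi$ this is precisely condition \eqref{eqP-09}, which by Remark~\ref{rmk:P-04} already suffices for Theorem~\ref{thm2}. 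To obtain \eqref{LH} in its stated form, combine this with Caccioppoli's inequality for $\tilde L$ and Sobolev--Poincar\'e (using that $\tilde{\vec u}$ vanishes on the flat face, and that $\tilde{\vec u}(x_0)=0$):
\[
\int_{B^+_r}\abs{D\tilde{\vec u}}^2 \le \frac{C}{r^2}\int_{B^+_{2r}}\abs{\tilde{\vec u}}^2 \le C r^{n-2+2\mu_0}[\tilde{\vec u}]_{C^{\mu_0}(B^+_s)}^2 \le C\Bigl(\tfrac{r}{s}\Bigr)^{n-2+2\mu_0}\int_{B^+_s}\abs{D\tilde{\vec u}}^2 .
\]
Undoing the bi-Lipschitz change of variables distorts radii and integrals by bounded factors, so \eqref{LH} holds near $x_0$; patching these boundary estimates with the interior ones by a routine finite-covering argument yields \eqref{LH} for all $x\in\Omega$ and $0<r<s\le R_{max}$, with the advertised dependence of the constants. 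An application of Theorem~\ref{thm2} then finishes the proof.

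The main obstacle is the flat--boundary $W^{1,p}$ estimate for systems with merely VMO coefficients, i.e.\ checking that the coefficient-freezing/comparison scheme --- whose engine is the smallness of the BMO seminorm of the coefficients on small balls --- goes through up to the flat face under the zero Dirichlet condition; this is available in the literature (boundary versions of the arguments behind \cite{DM}), but it is the one genuinely analytic ingredient. Everything else --- the $C^1$ flattening, the stability of VMO under it, Caccioppoli, and the covering --- is routine. If one is content to quote that the VMO and $C^1$ hypotheses imply \eqref{LH} (as recorded via the results of \cite{HK07}), the corollary reduces to a one-line invocation of Theorem~\ref{thm2}.
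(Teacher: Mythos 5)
Your proposal is correct and follows the same route as the paper: reduce to Theorem~\ref{thm2} by noting that the hypotheses (bounded $C^1$ domain, VMO coefficients satisfying \eqref{eqP-02}--\eqref{eqP-03}) imply condition \eqref{LH} for both $L$ and $\Lt$, and then, since $\Omega$ is bounded, absorb the $R_{max}$-dependence via the second clause of Theorem~\ref{thm2}. The paper simply records the validity of \eqref{LH} in this setting as a known fact, whereas you supply a (sound) sketch of why it holds --- interior freezing/perturbation plus $C^1$ boundary flattening, stability of VMO under that change of variables, and the flat-boundary $W^{1,p}$ theory for systems with VMO principal coefficients --- which is more detail than the paper gives but not a different argument.
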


In the above corollary, one may assume that $\vec A^{\alpha\beta}$ satisfy the weaker Legendre-Hadamard condition and may even include lower order terms in the operator. More precisely, let
\begin{equation}            \label{eq4.15ip}
L_\lambda \vec u = -D_\alpha (\vec{A}^{\alpha\beta} D_\beta \vec u)+D_\alpha(\vec{B}^\alpha \vec u)+\hat{\vec{B}}{}^\alpha D_\alpha \vec u+ \vec{C} \vec u + \lambda \vec u,
\end{equation}
where $\vec A^{\alpha\beta}, \vec B^\alpha, \hat{\vec B}{}^\alpha$, and $\vec C$ are $m\times m$ matrix valued functions on $\bR^n$ satisfying
\begin{equation}            \label{eq4.16ys}
\left\{\;
\begin{aligned}
A^{\alpha\beta}_{ij}(x)\xi^j \xi^i \eta_\beta \eta_\alpha \ge \nu \bigabs{\vec \xi}^2  \bigabs{\vec \eta}^2,\quad \forall\vec\xi\in \bR^m,\,\,\,\forall \vec\eta\in\bR^n, \,\,\,\forall x\in\bR^n;\\
\sum_{\alpha, \beta=1}^n\bignorm{\vec A^{\alpha\beta}}_{L^\infty}^2 \leq \nu^{-2};\quad \sum_{\alpha=1}^n \left(\bignorm{\vec B^\alpha}_{L^\infty}^2 +\bignorm{\hat{\vec B}{}^\alpha}_{L^\infty}^2 \right)+\bignorm{\vec C}_{L^\infty}^2\leq \nu^{-2},
\end{aligned}
\right.
\end{equation}
for some constant $\nu\in (0,1]$, and $\lambda$ is a scalar constant.
\begin{corollary}           \label{cor4.15}
Let $\Omega$ be a bounded $C^1$ domain and let the operator $L_\lambda$ be defined as in \eqref{eq4.15ip} with the coefficients satisfying the conditions \eqref{eq4.16ys}.
We assume further that the leading coefficients $\vec A^{\alpha\beta}$ belong to VMO.
There exists $\lambda_0\geq 0$ such that if $\lambda> \lambda_0$, then the Green's matrix $\vec G(x,y)$ of $L_\lambda$ in $\Omega$ exists and for all $x, y\in\Omega$ with $x\neq y$, we have
\[
\abs{\vec G(x,y)}  \leq C \bigset{d_x\wedge \abs{x-y}}^{\mu_0} \bigset{d_y\wedge \abs{x-y}}^{\mu_0} \abs{x-y}^{2-n-2\mu_0},
\]
 where the constants $\mu_0$ and $C$ depend on $n,m,\nu, \Omega, \lambda$, and  $\omega_\delta(\vec A^{\alpha\beta})$.
\end{corollary}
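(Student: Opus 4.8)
The plan is to deduce Corollary~\ref{cor4.15} from Theorem~\ref{thm2}. Although the operator $L_\lambda$ of \eqref{eq4.15ip} does not satisfy the running hypotheses \eqref{eqP-02}--\eqref{eqP-03} verbatim (it only obeys the weaker Legendre--Hadamard condition in \eqref{eq4.16ys} and carries lower-order terms), for $\lambda$ large the bilinear form of $L_\lambda$ will be coercive on $Y^{1,2}_0(\Omega)$, and this coercivity is what the construction of the Green's matrix and the arguments of Theorems~\ref{thm1b} and~\ref{thm2} use in place of \eqref{eqP-02}. Thus it remains to verify, for $L_\lambda$ and $\Lt_\lambda$ in $\Omega$, the local condition \eqref{eqP-09} of Remark~\ref{rmk:P-04}, after which the last clause of Theorem~\ref{thm2} (valid under \eqref{eqP-09}) gives \eqref{eq2.26s} for all $x\ne y$ in the bounded domain $\Omega$, with the dependence on $R_{max}/\diam\Omega$ absorbed into the dependence on $\Omega$.

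\textbf{Choice of $\lambda_0$.} The Legendre--Hadamard condition in \eqref{eq4.16ys} together with the quasicontinuity-in-mean of $\vec A^{\alpha\beta}$ (VMO) yields G\aa rding's inequality: there are $\nu'>0$ and $C_1$, depending only on $n,m,\nu$ and $\omega_\delta(\vec A^{\alpha\beta})$, such that $\int_\Omega \vec A^{\alpha\beta}D_\beta\vec u\cdot D_\alpha\vec u\ge \nu'\norm{D\vec u}_{L^2(\Omega)}^2-C_1\norm{\vec u}_{L^2(\Omega)}^2$ for all $\vec u\in W^{1,2}_0(\Omega)$. The bounded lower-order coefficients are dominated by $\epsilon\norm{D\vec u}_{L^2(\Omega)}^2+C_\epsilon\norm{\vec u}_{L^2(\Omega)}^2$, so there is $\lambda_0=\lambda_0(n,m,\nu,\omega_\delta(\vec A^{\alpha\beta}))\ge 0$ such that for $\lambda>\lambda_0$ the form of $L_\lambda$ is coercive on $W^{1,2}_0(\Omega)=Y^{1,2}_0(\Omega)$ (recall $\abs{\Omega}<\infty$; see Remark~\ref{rmk2.4q}). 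The adjoint $\Lt_\lambda$ has leading coefficients ${}^t\vec A^{\alpha\beta}$, which again satisfy the Legendre--Hadamard condition with the same $\nu$ and lie in VMO, so the same $\lambda_0$ works for it. Hence for $\lambda>\lambda_0$ the Dirichlet problems for $L_\lambda$ and $\Lt_\lambda$ are uniquely solvable in $Y^{1,2}_0(\Omega)$, which is what part~iii) of Definition~\ref{def2} requires.

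\textbf{Verification of \eqref{eqP-09}.} Fix $x\in\Omega$ and let $\vec u\in W^{1,2}(\Omega_R(x))$ solve $L_\lambda\vec u=0$ (or $\Lt_\lambda\vec u=0$) in $\Omega_R(x)$ and vanish on $\varSigma_R(x)$, where $R$ is below the scale at which---after a rotation and a $C^1$ change of variables flattening $\partial\Omega$, under which the Legendre--Hadamard structure and the VMO property persist---the domain is a small perturbation of a half-space and $\omega_R(\vec A^{\alpha\beta})$ is small. Write the equation as $-D_\alpha(\vec A^{\alpha\beta}D_\beta\vec u)=D_\alpha\vec f^\alpha+\vec g$ with $\vec f^\alpha=-\vec B^\alpha\vec u$ and $\vec g=-\hat{\vec B}^\alpha D_\alpha\vec u-(\vec C+\lambda)\vec u$, and apply Campanato's perturbation method to the principal part: freezing the leading coefficients and using the decay estimate for constant-coefficient Legendre--Hadamard systems---together with the analogous estimate up to the flat boundary, which uses that $\vec u$ vanishes there---one obtains, after a hole-filling iteration that absorbs the mean oscillation of $\vec A^{\alpha\beta}$ and the lower-order contributions on small balls, a Morrey-type decay for $\int\abs{D\vec u}^2$ with some exponent $n-2+2\mu_0$; equivalently one may bootstrap the interior and boundary $W^{1,p}$-estimates for divergence-form Legendre--Hadamard systems with VMO coefficients in a $C^1$ domain from $p=2$ up to some $p>n$. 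Either route yields the scale-invariant bound $[\vec u]_{C^{\mu_0}(\Omega_{R/2}(x))}\le C_0 R^{-\mu_0}(\fint_{\Omega_R(x)}\abs{\vec u}^2)^{1/2}$, with $\mu_0,C_0$ depending only on $n,m,\nu,\lambda,\omega_\delta(\vec A^{\alpha\beta})$ and the $C^1$-character of $\Omega$. Since $\vec u$ vanishes continuously on $\varSigma_{R/2}(x)$, the zero-extension $\tilde{\vec u}=\chi_{\Omega_R(x)}\vec u$ inherits the same H\"older bound on $B_{R/2}(x)$, which is precisely \eqref{eqP-09} (with $R_{max}$ the common threshold radius).

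\textbf{Conclusion and main obstacle.} With \eqref{eqP-09} in force, Remark~\ref{rmk:P-04} (which also records that \eqref{eqP-09} implies both \eqref{LB} and \eqref{eqP-09a}) lets us run the arguments of Theorems~\ref{thm1b} and~\ref{thm2}: the Green's matrix of $L_\lambda$ in $\Omega$ exists and, because $R_{max}<\infty$ and $\Omega$ is bounded, satisfies \eqref{eq2.26s} for all $x\ne y$; collecting parameters, $C$ and $\mu_0$ depend only on $n,m,\nu,\Omega,\lambda$ and $\omega_\delta(\vec A^{\alpha\beta})$. The main obstacle is the boundary-regularity step: one must establish the scale-invariant H\"older (or Morrey) estimate \emph{uniformly in the base point $x\in\Omega$} for Legendre--Hadamard systems with merely VMO leading coefficients and $L^\infty$ lower-order terms in a $C^1$ domain, carefully tracking how the $C^1$-modulus of $\partial\Omega$ and $\omega_\delta(\vec A^{\alpha\beta})$ fix the admissible radius $R_{max}$ and making sure that treating the lower-order terms as a right-hand side preserves the homogeneity of the estimate; by contrast, the coercivity argument fixing $\lambda_0$ is routine.
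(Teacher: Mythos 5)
Your proposal follows essentially the same route as the paper's own sketch: for $\lambda$ above a threshold, use G\aa rding's inequality and coercivity to solve the Dirichlet problem and construct the averaged Green's matrix following \cite[\S 4]{HK07}, verify condition \eqref{eqP-09} of Remark~\ref{rmk:P-04} for $L_\lambda$ and $\Lt_\lambda$, and then run the modified arguments of Theorems~\ref{thm1b} and~\ref{thm2}. The paper compresses all of this into a short ``sketch of proof'' (with a minor typo referencing Remark~\ref{rmk:P-03} where Remark~\ref{rmk:P-04} is meant); you have merely unpacked the same steps, with a more explicit account of the G\aa rding/coercivity argument and of how one would establish \eqref{eqP-09} by freezing coefficients and a Campanato/hole-filling iteration after a $C^1$ boundary flattening, and you correctly flag the boundary-regularity estimate for Legendre--Hadamard systems with VMO leading coefficients as the substantive step that the paper leaves to the reader.
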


To give a sketch of proof for Corollary~\ref{cor4.15}, first we note that for sufficiently large $\lambda$, we have the solvability of the following problem in $Y^{1,2}_0(\Omega)^m=W^{1,2}_0(\Omega)^m$:
\[
\left\{\begin{array}{r cl}
L_\lambda \vec u=\vec f &\text{in}& \Omega,\\
\vec u= 0&\text{on} &\partial\Omega,\end{array}\right.
\]
where $\vec f\in L^\infty_c(\Omega)$.
In particular, one can construct the ``averaged Green's matrix'' $\vec G^\rho(x,y)$ of $L_\lambda$ in $\Omega$ by following the argument in \cite[\S 4]{HK07}.
Also, it is not hard to see that the condition \eqref{eqP-09} in Remark~\ref{rmk:P-03} is satisfied in this case.
In particular, we have the condition \eqref{LB}. 
We point out that these are all the ingredients needed for construction of the Green's matrix $\vec G(x,y)$ of $L_\lambda$ in $\Omega$.
Then by modifying the proofs of Theorem~\ref{thm1b} and Theorem~\ref{thm2}, one can prove the above corollary; see Remark~\ref{rmk:P-04}. The details are left the the reader.

\begin{remark}
In Corollary \ref{cor3} and Corollary \ref{cor4.15}, the conditions of $\Omega$ and $\vec A^{\alpha\beta}$ can be relaxed. We may assume that $\Omega$ is a bounded Lipschitz domain with a sufficiently small Lipschitz constant, and $\omega_\delta(\vec A^{\alpha\beta})$ is also sufficiently small for some $\delta>0$;  see e.g., \cite{AT}.
\end{remark}

\subsection{Stationary Stokes system}
Let $\Omega\subset \bR^3$ be a bounded Lipschitz domain with connected boundary.
We consider the stationary Stokes system
\begin{equation}    \label{stokes-100}
-\Delta \vec u+\nabla p=0,\quad \dv \vec u=0 \quad\text{in }\,\,\Omega.
\end{equation}
It is known that the condition \eqref{LH} is satisfied in this setting; see \cite{Shen} and also  \cite{CC09}.
We also note that Caccioppoli's inequalities are available for the system \eqref{stokes-100}.
Then again, by modifying the proof of Theorem~\ref{thm2}, one can prove the following corollary.
\begin{corollary}
Let $\Omega\subset\mathbb{R}^3$ be a bounded Lipschitz domain with connected boundary. Let $\vec G$ be the Green's matrix of the stationary Stokes system \eqref{stokes-100} in $\Omega$. Then for all $x,y\in \Omega$ with $x \neq y$, we have
\begin{equation}
\label{eq4.17ue}
\abs{\vec G(x,y)}\leq C \bigset{d_x\wedge \abs{x-y}}^{\mu_0} \bigset{d_y\wedge \abs{x-y}}^{\mu_0} \abs{x-y}^{-1-2\mu_0}
\end{equation}
for some positive constants $C$ and $\mu_0$ depending on $\Omega$.
\end{corollary}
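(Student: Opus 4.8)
The plan is to check that the hypotheses underlying Theorem~\ref{thm1b} and Theorem~\ref{thm2} are available for the Stokes system \eqref{stokes-100}, with the ``Green's matrix'' understood as the $3\times 3$ velocity kernel. Recall first that on a bounded Lipschitz $\Omega\subset\bR^3$ with connected boundary the Dirichlet problem $-\Delta\vec u+\nabla p=\vec f$, $\dv\vec u=0$ in $\Omega$, $\vec u=0$ on $\partial\Omega$ is uniquely solvable in $W^{1,2}_0(\Omega)^3$ for $\vec f\in L^\infty_c(\Omega)$, the pressure $p\in L^2(\Omega)$ being determined up to an additive constant (where connectedness of $\partial\Omega$ enters), together with the energy bound $\norm{D\vec u}_{L^2(\Omega)}\le C\norm{\vec f}_{(W^{1,2}_0(\Omega))^*}$. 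Since here $\nu=1$ and $m=n=3$ are fixed, every constant below depends only on $\Omega$.

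First I would construct the averaged Green's matrix $\vec G^\rho(\cdot,y)$, together with an averaged pressure field $\Pi^\rho(\cdot,y)$, exactly as in \cite[\S4]{HK07}: for each $k\in\{1,2,3\}$ solve the Stokes problem with right-hand side $\abs{B_\rho(y)}^{-1}\chi_{B_\rho(y)}\vec{e}_k$ under the divergence-free constraint, and collect the velocities into the columns of $\vec G^\rho(\cdot,y)$. The duality representation of $\vec G^\rho$, the condition \eqref{LH} — which holds in this setting by \cite{Shen} (see also \cite{CC09}) and, by the Appendix and Remark~\ref{rmk:P-04}, implies \eqref{LB} and \eqref{eqP-09} — and the Caccioppoli inequality for the Stokes system together yield pointwise and energy bounds on $\vec G^\rho$ away from the diagonal that are uniform in $\rho$. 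Passing to the limit $\rho\to 0$ then produces the Green's matrix $\vec G(x,y)$ of \eqref{stokes-100}; by the symmetry of $-\Delta$ one also gets $\vec G(x,y)={}^t\vec G(y,x)^T$ and the bound $\abs{\vec G(x,y)}\le C\abs{x-y}^{-1}$. This is the content of Theorem~\ref{thm1b} with $n=3$.

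Next I would repeat the argument of Theorem~\ref{thm2} (cf.\ Remark~\ref{rmk:P-04}) to gain the boundary decay. Fixing $x\ne y$, choosing $x_0\in\partial\Omega$ with $d_x=\abs{x-x_0}$ and setting $r=\tfrac12\abs{x-y}$, the column $\vec G(\cdot,y)$ is a weak solution of the homogeneous Stokes system in $\Omega_s(x_0)$ that vanishes on $\varSigma_s(x_0)$ for $s\lesssim r$; iterating \eqref{LH} over dyadic annuli gives $\int_{\Omega_t(x_0)}\abs{D\vec G(\cdot,y)}^2\lesssim (t/s)^{1+2\mu_0}\int_{\Omega_s(x_0)}\abs{D\vec G(\cdot,y)}^2$, and feeding this energy decay into the $L^\infty$--$L^2$ estimate \eqref{LB}/\eqref{eqP-09} at scale $d_x$, together with the interior bound $\abs{\vec G}\le C\abs{x-y}^{-1}$ already obtained, produces the factor $\{d_x\wedge\abs{x-y}\}^{\mu_0}$. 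The symmetric manipulation in the $y$-variable (using $\vec G(x,\cdot)={}^t\vec G(\cdot,x)^T$ and that \eqref{LH} holds for $\Lt=-\Delta$ as well) gives the factor $\{d_y\wedge\abs{x-y}\}^{\mu_0}$. As $\Omega$ is bounded we may take $R_{max}\asymp\diam\Omega$, so the resulting estimate \eqref{eq4.17ue} holds for all $x\ne y$ with $C=C(\Omega)$.

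The main obstacle is the pressure, and it enters only at the level of the \emph{local} ingredients. First, the construction of $\vec G^\rho$ must be carried out with divergence-free test functions and an accompanying pressure kernel, and the existence and uniqueness of the pressure is precisely what forces the hypothesis that $\partial\Omega$ be connected. Second, the Caccioppoli inequality for Stokes is not the naive one: controlling $p-\bar p$ on a ball by $\norm{D\vec u}_{L^2}$ on a slightly larger ball requires a Bogovskii-type solution of $\dv\vec w=p-\bar p$ as a test function, which the excerpt takes as known. Third, one should verify that the form of \eqref{LH} quoted from \cite{Shen,CC09} matches Condition~\eqref{LH}, i.e.\ concerns solutions vanishing on $\varSigma_R(x)$ in our sense. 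Once these local facts are granted, the proofs of Theorem~\ref{thm1b} and Theorem~\ref{thm2} go through with no essential change, since all of their manipulations are local and use only the homogeneous equation, Caccioppoli, and the estimates \eqref{LB}/\eqref{LH}; the routine details are left to the reader.
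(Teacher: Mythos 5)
Your proposal is correct and follows essentially the same route as the paper: verify condition \eqref{LH} via \cite{Shen} and \cite{CC09}, construct the averaged Green's matrix as in \cite[\S4]{HK07} adapted to the divergence-free setting, and then run the arguments of Theorem~\ref{thm1b} and (via Lemma~\ref{lem3.6}) Theorem~\ref{thm2} with $R_{max}\asymp\diam\Omega$. The paper's own treatment is a one-paragraph sketch deferring to ``modifying the proof of Theorem~\ref{thm2}''; you have filled in the same outline with more care, and your flags about where the pressure actually intervenes — Bogovskii in the Caccioppoli inequality, solvability of the divergence-constrained problem requiring connected $\partial\Omega$, the need to check that \cite{Shen}'s boundary Hölder estimate really is Condition~\eqref{LH} for solutions vanishing in the $Y^{1,2}$ trace sense — are exactly the points the paper leaves to the reader.
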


We remark that estimate \eqref{eq4.17ue} of the Green's matrices for the Lam\'e system and the Stokes system were recently shown in \cite{CC09} by a different method to ours.

\mysection{Proofs of Main Theorems}     \label{sec:p}
\subsection{Proof of Theorem \ref{thm1}}
Let $R\in (0,R_{max})$ and $y\in \Omega$ be arbitrary, but fixed.
Assume that $\vec f \in L^\infty(\Omega)$ is supported in $\Omega_R(y)$ and let $\vec u$ be defined by \eqref{eq2.9x}.
Notice that we may take $\vec u$ in place of $\vec \phi$ in \eqref{eq2.7i}.
Then by \eqref{eqP-14} we have
\begin{equation}        \label{eq3.1a}
\norm{\vec u}_{L^{2n/(n-2)}(\Omega)} \le C\norm{D\vec u}_{L^2(\Omega)}\le C \norm{\vec f}_{L^{2n/(n+2)}(\Omega)}\leq C  R^{1+n/2} \norm{\vec f}_{L^\infty(\Omega_R(y))}.
\end{equation}
Also, notice from Remark~\ref{rmk2.4q} that $\vec u \in W^{1,2}(\Omega_R(y))$.
Therefore, $\vec u$ is a weak solution of $\Lt \vec u=\vec f$ in $\Omega_R(y)$ vanishing on $\varSigma_R(y)$ and thus, by the condition \eqref{LB} we have
\begin{equation}        \label{eq3.2b}
\norm{\vec u}_{L^\infty(\Omega_{R/2}(y))} \le N_0 \left(R^{-n/2} \norm{\vec u}_{L^2(\Omega_R(y))}+ R^2 \norm{\vec f}_{L^\infty(\Omega_R(y))}\right).
\end{equation}
Then by \eqref{eq3.2b}, \eqref{eq3.1a}, and H\"older's inequality, we derive
\begin{equation}    \label{eq3.2v}
\norm{\vec u}_{L^\infty(\Omega_{R/2}(y))} \le C R^2 \norm{\vec f}_{L^\infty(\Omega_R(y))}.
\end{equation}
Hence, by \eqref{eq2.9x} and \eqref{eq3.2v}, we conclude that
\begin{equation}        \label{eq3.3y}
\Biggabs{\int_{\Omega_R(y)} \vec G(\cdot,y) \vec f \,} \le CR^2 \norm{\vec f}_{L^\infty(\Omega_R(y))},\quad \forall \vec f\in L^\infty(\Omega_R(y)).
\end{equation}
Therefore, by duality, we conclude from \eqref{eq3.3y} that
\begin{equation}    \label{eq3.3w}
\norm{\vec G(\cdot,y)}_{L^1(\Omega_R(y))}\le C R^2.
\end{equation}

Next, notice that  \eqref{LB} implies that for $x\in\Omega$ and $R \in(0,R_{max})$, if $\vec v \in W^{1,2}(\Omega_R(x))$ is a weak solution of $L \vec v=0$ in $\Omega_R=\Omega_R(x)$ vanishing  on $\varSigma_R(x)$, then we have
\[
\norm{\vec v}_{L^\infty(\Omega_{R/2})}  \le N_0 R^{-n/2} \norm{\vec v}_{L^2(\Omega_R)}.
\]
Then, by a standard argument (see e.g., \cite[pp. 80--82]{Gi93}) we also have
\begin{equation}    \label{eq2.8r}
\norm{\vec v}_{L^\infty(\Omega_{R/2})}  \le C_p R^{-n/p} \norm{\vec v}_{L^p(\Omega_R)}, \quad\forall p>0,
\end{equation}
where the constant $C_p$ depends on $n$, $N_0$, and $p$.

Now, for any $x\in\Omega$ such that $0<\abs{x-y}<R_{max}/2$, set $R:=2\abs{x-y}/3$.
Notice that Definition~\ref{def2} implies that $\vec G(\cdot,y)\in W^{1,2}(\Omega_R(x))$ and satisfies $L\vec G(\cdot,y)= 0$ weakly in $\Omega_R(x)$ and $\vec G(\cdot,y)=0$ on $\varSigma_R(x)$.
Therefore, by \eqref{eq2.8r} and \eqref{eq3.3w}, we have
\begin{equation}        \label{eq4.6m}
\abs{\vec G(x,y)} \le C R^{-n} \norm{\vec G(\cdot,y)}_{L^1(\Omega_R(x))} \le C R^{-n} \norm{\vec G(\cdot,y)}_{L^1(\Omega_{3R}(y))} \le C R^{2-n}.
\end{equation}
We have thus shown that
\[
\abs{\vec G(x,y)} \leq  C \abs{x-y}^{2-n}\quad \text{for all $x,y\in\Omega$ satisfying $0<\abs{x-y}<R_{max}/2$},
\]
where $C=C(n,m,\nu, N_0)$.
The theorem then follows from Remark~\ref{rmk3.3b}.
\hfill\qedsymbol

\subsection{Proof of Theorem~\ref{thm1b}}
The existence of the Green's matrices $\vec G(x,y)$ and ${}^t\vec G(x,y)$ as well as the identity \eqref{eq2.20f} is a consequence of the condition \eqref{IH}; see \cite[Theorem~4.1]{HK07} and \cite[Eq. (4.34)]{HK07}.
Then, by Theorem~\ref{thm1}, the condition \eqref{LB} yields the estimate \eqref{eq2.17d}.
Also, by \cite[Eq. (4.24)]{HK07}, we find that the estimate in \eqref{eq3.9c} is valid for $0<r<(d_y\wedge R_c)/2$. To give a full proof of \eqref{eq3.9c}, we need to make use of the estimate \eqref{eq2.17d} and adapt the arguments used in \cite{HK07} as follows.

For $\rho>0$, let $\vec G^\rho(\cdot,y)\in Y^{1,2}_0(\Omega)$ be the averaged Green's matrix of $L$ in $\Omega$ as constructed in \cite[\S 4.1]{HK07}. Notice that by \cite[Eq. (4.3)]{HK07}, we have
\begin{equation}
\label{eqG-04}
\int_{\Omega}A^{\alpha\beta}_{ij} D_\beta G^\rho_{jk}(\cdot,y)D_\alpha u^i = \fint_{\Omega_\rho(y)}u^k, \quad\forall\vec u\in Y^{1,2}_0(\Omega).
\end{equation}
Also, by \cite[Eq. (4.2)]{HK07}, we have
\begin{equation}
\label{eqG-02}
\norm{D\vec G^\rho(\cdot,y)}_{L^2(\Omega)}\le C \abs{\Omega_\rho(y)}^{(2-n)/2n} \leq C \rho^{(2-n)/2},
\end{equation}
where $C=C(n,m,\nu)$.
Denote by $\bH$ the Hilbert space $Y^{1,2}_0(\Omega)^m$ with the inner product \[\ip{\vec u, \vec v}:=\int_\Omega D_{\alpha} u^i D_{\alpha}v^i.\]
For all $\vec f\in L^\infty_c(\Omega)$, the linear functional \[\vec w\mapsto \int_\Omega \vec f\cdot \vec w\] is bounded on $\bH$. Hence, by the Lax-Milgram lemma there is a unique $\vec u\in \bH$ satisfying
\begin{equation}
\label{eqE-11}
\int_\Omega A^{\alpha\beta}_{ij} D_\beta w^j D_\alpha u^i= \int_\Omega \vec f\cdot \vec w,
\quad\forall\vec w\in \bH.
\end{equation}
Thus, if we set $\vec w$ to be the $k$-th column of $\vec G^\rho(\cdot,y)$ in \eqref{eqE-11}, we obtain from \eqref{eqG-04} that
\begin{equation}
\label{eqE-12}
\int_{\Omega} G^\rho_{ik}(\cdot,y) f^i=\fint_{\Omega_\rho(y)}u^k.
\end{equation}
Also, if we set $\vec w=\vec u$ in \eqref{eqE-11}, it follows from \eqref{eqP-14} that
\begin{equation}
\label{eqE-13}
\norm{\vec u}_{L^{2n/(n-2)}(\Omega)} \le C\norm{D\vec u}_{L^2(\Omega)}\le C \norm{\vec f}_{L^{2n/(n+2)}(\Omega)}.
\end{equation}

Let us now assume that $\vec f$ is supported in $\Omega_R:=\Omega_R(y)$, where $y\in \Omega$ and $R \in (0,R_{max})$ are arbitrary, but fixed.
Notice that $\vec u \in W^{1,2}(\Omega_R)$ and $\vec u$ is a weak solution of $\Lt \vec u =\vec f$ in $\Omega_R$ vanishing on $\varSigma_R$.
Therefore, the condition \eqref{LB} implies that
\[
\norm{\vec u}_{L^\infty(\Omega_{R/2})} \le N_0 \left(R^{-n/2} \norm{\vec u}_{L^2(\Omega_R)}+ R^{2} \norm{\vec f}_{L^\infty(\Omega_R)}\right).
\]
On the other hand, \eqref{eqE-13} and H\"older's inequality yields
\[
\norm{\vec u}_{L^2(\Omega_R)} \leq C R^{2+n/2} \norm{\vec f}_{L^\infty(\Omega_R)}
\]
By combining the above two inequalities, we obtain
\begin{equation}    \label{eq3.6a}
\norm{\vec u}_{L^\infty(\Omega_{R/2})} \le C R^{2} \norm{\vec f}_{L^\infty(\Omega_R)}.
\end{equation}
Then, by \eqref{eqE-12} and \eqref{eq3.6a} we derive
\[
\Biggabs{\int_{\Omega_R} G^\rho_{ik}(\cdot,y) f^i} \le C R^{2} \norm{\vec f}_{L^\infty(\Omega_R)},\quad \forall \vec f\in L^\infty(\Omega_R),\quad \forall \rho \in (0,R/2).
\]
Therefore, by duality, we conclude that
\[
\norm{\vec G^\rho(\cdot,y)}_{L^1(\Omega_R(y))}\le C R^{2},\quad \forall \rho \in(0,R/2).
\]
Now, for any $x\in\Omega$ such that $0<\abs{x-y}<R_{max}/2$, let us take $R:=2\abs{x-y}/3$.
Notice that if $\rho<R/2$, then $\vec G^\rho(\cdot,y)\in W^{1,2}(\Omega_R(x))$ and satisfies $L\vec  G^\rho(\cdot,y)= 0$ in $\Omega_R(x)$ and vanishes on $\varSigma_R(x)$.
Therefore, as in \eqref{eq4.6m}, we have
\[
\abs{\vec G^\rho(x,y)}  \le C R^{-n} \norm{\vec G^\rho(\cdot,y)}_{L^1(\Omega_{3R}(y))}\le CR^{2-n}.
\]
We have thus proved that for any $x,y\in \Omega$ satisfying $0<\abs{x-y}<R_{max}/2$, we have
\begin{equation}        \label{eq3.8a}
\abs{\vec G^\rho(x,y)} \le C \abs{x-y}^{2-n},\quad \forall \rho <\abs{x-y}/3,
\end{equation}
where  $C=C(n,m,\nu, N_0)$.

Next, fix any $r\in (0,R_{max}/2)$ and let $\vec v_\rho$ be the $k$-th column of $\vec G^\rho(\cdot,y)$, where $k=1,\ldots, m$ and $0<\rho<r/6$. Let $\eta$ be a smooth function on $\bR^n$ satisfying
\begin{equation}        \label{eq4.19h}
0\leq \eta\leq 1,\quad \eta\equiv 1\,\text{ on }\,\bR^n\setminus B_r(y),\quad \eta\equiv 0\,\text{ on }\, B_{r/2}(y),\quad\text{and}\quad \abs{D\eta} \le 4/r.
\end{equation}
We set $\vec u=\eta^2\vec v_\rho$ in \eqref{eqG-04} and then use \eqref{eq3.8a} to obtain
 \begin{equation}       \label{eq4.24y}
\int_\Omega \eta^2 \bigabs{D\vec v_\rho}^2 \le C \int_\Omega \bigabs{D\eta}^2 \bigabs{\vec v_\rho}^2 \le C r^{-2} \int_{B_r(y)\setminus B_{r/2}(y)} \abs{x-y}^{2(2-n)}\,dx\leq  C r^{2-n}.
\end{equation}
Therefore, by \eqref{eq4.19h}, \eqref{eqP-14}, and \eqref{eq4.24y},  we obtain
\[
\bignorm{\vec v_\rho}_{L^{2n/(n-2)}(\Omega\setminus B_r(y))} \leq \bignorm{\eta \vec v_\rho}_{L^{2n/(n-2)}(\Omega)}\leq C \bignorm{D \big(\eta \vec v_\rho\big)}_{L^2(\Omega)} \le C r^{(2-n)/2}
\]
provided that $0<\rho<r/6$. On the other hand, if $\rho\ge r/6$, then \eqref{eqG-02} implies
\[
\bignorm{\vec v_\rho}_{L^{2n/(n-2)}(\Omega\setminus B_{r}(y))} \leq \bignorm{\vec v_\rho}_{L^{2n/(n-2)}(\Omega)} \leq C \bignorm{D \vec v_\rho}_{L^2(\Omega)} \leq C r^{(2-n)/2}.
\]
By combining the above two estimates, we obtain
\begin{equation}							\label{eqG-20}
\norm{\vec G^\rho(\cdot,y)}_{L^{2n/(n-2)}(\Omega\setminus B_r(y))} \le C r^{(2-n)/2}, \quad \forall r \in (0,R_{max}/2),\quad \forall \rho>0.
\end{equation}
Notice from \eqref{eq4.24y} and \eqref{eq4.19h} that for $0<\rho<r/6$, we have
\[
\norm{D\vec G^\rho(\cdot,y)}_{L^2(\Omega\setminus B_r(y))}\leq C r^{(2-n)/2}.
\]
In the case when $\rho \ge r/6$, we obtain from \eqref{eqG-02} that
\[
\norm{D\vec G^\rho(\cdot,y)}_{L^2(\Omega\setminus B_r(y))} \le \norm{D\vec G^\rho(\cdot,y)}_{L^2(\Omega)} \le C \rho^{(2-n)/2} \le C r^{(2-n)/2}.
\]
By combining the above two inequalities, we obtain
\begin{equation}							\label{eqG-14}
\norm{D\vec G^\rho(\cdot,y)}_{L^2(\Omega\setminus B_r(y))}\leq C r^{(2-n)/2},\quad \forall r\in(0,R_{max}/2),\quad \forall \rho>0.
\end{equation}

Notice from \cite[Eq. (4.19)]{HK07} that there exists a sequence $\{\rho_\mu\}_{\mu=1}^\infty$ tending to zero such that $\vec G^{\rho_\mu}(\cdot,y) \rightharpoonup \vec G(\cdot,y)$ weakly in $Y^{1,2}_0(\Omega\setminus B_r(y))$ for all $r>0$.
Therefore, \eqref{eq3.9c} follows from \eqref{eqG-20}, \eqref{eqG-14}, and the obvious fact that $R_{max}/2$ and $R_{max}$ are comparable to each other in the case when $R_{max}<\infty$.
The proof is complete.
\hfill\qedsymbol

\subsection{Proof of Theorem~\ref{thm1c}}
As we mentioned in the proof of Theorem~\ref{thm1b}, the condition \eqref{IH} implies the existence of the Green's matrices $\vec G(x,y)$ and ${}^t \vec G(x,y)$ in $\Omega$ and also the identity \eqref{eq2.20f}.
Hence, if $\vec G(x,y)$ satisfies the estimate \eqref{eq2.17dd}, then so does ${}^t \vec G(x,y)$.
Therefore, by the symmetry, it is enough to prove \eqref{LB} for weak solutions of $\Lt \vec u = \vec f$.

Let $x\in\Omega$ and $R\in (0,R_{max})$ be given.	
Assume that $\vec u \in W^{1,2}(\Omega_R(x))$ is a weak solution of $\Lt\vec u=\vec f$ in $\Omega_R(x)$ vanishing on $\varSigma_R(x)$, where $\vec f\in L^\infty(\Omega_R(x))$.
Then, we have
\begin{equation}                \label{eq4.23h}
\int_{\Omega_R(x)} A^{\alpha\beta}_{ij}D_\alpha u^i D_\beta w^j=\int_{\Omega_R(x)}f^j w^j,\quad \forall \vec w \in W^{1,2}_0(\Omega_R(x)).
\end{equation}

Let $\vec G^\rho(\cdot,y)$ be the averaged Green's matrix of $L$ in $\Omega$ as in the proof of Theorem~\ref{thm1b}.
Set $\vec v=\zeta \vec u$, where $\zeta$ is a smooth cut-off function on $\bR^n$ satisfying
\begin{equation}                \label{eq4zeta}
0\leq \zeta\leq 1,\quad \supp \zeta \subset B_{R/2}(x),\quad \zeta\equiv 1\,\text{ on }\, B_{3R/8}(x),\quad\text{and}\quad \abs{D\zeta} \le 16/R.
\end{equation}
Notice that $\vec v \in Y^{1,2}_0(\Omega)$ and thus, by \eqref{eqG-04}, we obtain
\begin{equation}                \label{eq4.24e}
\fint_{\Omega_\rho(y)} \zeta u^k= \int_{\Omega}A^{\alpha\beta}_{ij} D_\beta G^\rho_{jk}(\cdot,y)u^iD_\alpha \zeta + \int_{\Omega}A^{\alpha\beta}_{ij} D_\beta G^\rho_{jk}(\cdot,y) \zeta D_\alpha u^i.
\end{equation}

On the other hand, notice that $\zeta \vec G^\rho(\cdot,y) \in W^{1,2}_0(\Omega_R(x))$. Hence, if we set $\vec w$ to be the $k$-th column of $\zeta\vec G^\rho(\cdot,y)$, then by \eqref{eq4.23h}, we obtain
\begin{equation}                \label{eq4.25f}
\int_{\Omega_R(x)} A^{\alpha\beta}_{ij}D_\alpha u^i G^\rho_{jk}(\cdot,y)D_\beta\zeta+\int_{\Omega_R(x)} A^{\alpha\beta}_{ij}D_\alpha u^i \zeta D_\beta G^\rho_{jk}(\cdot,y)=\int_{\Omega_R(x)}\zeta f^j G^\rho_{jk}(\cdot,y).
\end{equation}
Recall that $\supp \zeta \subset B_{R/2}(x)$.
Therefore, by combining \eqref{eq4.24e} and \eqref{eq4.25f}, we obtain
\begin{align}                   \label{eq4.26u}
\fint_{\Omega_\rho(y)} \zeta u^k&= \int_\Omega A^{\alpha\beta}_{ij} D_\beta G^\rho_{jk}(\cdot,y)u^iD_\alpha \zeta - \int_\Omega A^{\alpha\beta}_{ij}G^\rho_{jk}(\cdot,y)D_\alpha u^i D_\beta\zeta +\int_\Omega \zeta f^j G^\rho_{jk}(\cdot,y)\\
                            \nonumber
&=: I_1 + I_2 + I_3.
\end{align}

Now, assume that $y\in \Omega_{R/4}(x)$. Notice from \eqref{eq4zeta} that we have $\dist(y, \supp D\zeta)> R/8$. Set $r=R/8\wedge (d_y\wedge R_c)$.
By \cite[Eq. (4.17)]{HK07} and \cite[Eq. (4.19)]{HK07}, there exists a sequence $\{\rho_\mu\}_{\mu=1}^\infty$ tending to zero such that $\vec G^{\rho_\mu}(\cdot,y)\rightharpoonup \vec G(\cdot,y)$ weakly in $W^{1,q}(B_r(y))$ for $q\in (1,\frac{n}{n-1})$ and $\vec G^{\rho_\mu}(\cdot,y) \rightharpoonup \vec G(\cdot,y)$ weakly in $Y^{1,2}_0(\Omega\setminus B_r(y))$.
Notice that
\begin{align*}
I_1+I_2 &= \int_{\Omega\setminus B_r(y)} A^{\alpha\beta}_{ij} D_\beta G^\rho_{jk}(\cdot,y)u^iD_\alpha \zeta - \int_{\Omega\setminus B_r(y)} A^{\alpha\beta}_{ij}G^\rho_{jk}(\cdot,y)D_\alpha u^i D_\beta\zeta;\\
I_3 &=\int_{\Omega\setminus B_r(y)} \zeta f^j G^\rho_{jk}(\cdot,y)+\int_{B_r(y)} \zeta f^j G^\rho_{jk}(\cdot,y).
\end{align*}
Therefore, by taking limits in \eqref{eq4.26u} and using \eqref{eq4zeta}, we have for almost all $y\in \Omega_{R/4}(x)$,
\begin{align}               \label{eq4.28v}
u^k(y)&= \int_\Omega A^{\alpha\beta}_{ij} D_\beta G_{jk}(\cdot,y)u^iD_\alpha \zeta - \int_\Omega A^{\alpha\beta}_{ij}G_{jk}(\cdot,y)D_\alpha u^i D_\beta\zeta +\int_\Omega \zeta f^j G_{jk}(\cdot,y)\\
                        \nonumber
&=: I_1'+ I_2'+ I_3'.
\end{align}
Denote $A_R(y)=\Omega_{3R/4}(y)\setminus B_{R/8}(y)$.
By using H\"older's inequality and \eqref{eq4zeta} we obtain
\begin{align*}
\bigabs{I_1'} &\leq C R^{-1}\norm{D\vec G(\cdot,y)}_{L^2(A_R(y))}\, \norm{\vec u}_{L^2(\Omega_{R/2}(x))},\\
\bigabs{I_2'} &\leq C R^{-1} \norm{\vec G(\cdot,y)}_{L^2(A_R(y))}\, \norm{D\vec u}_{L^2(\Omega_{R/2}(x))},\\
\intertext{and}
\bigabs{I_3'} &\leq C \norm{\vec G(\cdot,y)}_{L^1(\Omega_{3R/4}(y))}\, \norm{\vec f}_{L^\infty(\Omega_{R/2}(x))}.
\end{align*}
Denote $\tilde A_R(y)=\Omega_R(y)\setminus B_{R/16}(y)$.
Observe that $\vec G(\cdot,y) \in W^{1,2}(\tilde A_R(y))$ and it satisfies $L\vec G(\cdot,y)=0$ weakly in $\tilde A_R(y)$ and vanishes on $\partial\Omega \cap \partial \tilde A_R(y)$.
Then by the Caccioppoli's inequality and the estimate \eqref{eq2.17dd}, we obtain
\[
\norm{D\vec G(\cdot,y)}_{L^2(A_R(y))} \leq C R^{-1} \norm{\vec G(\cdot,y)}_{L^2(\tilde A_R(y))} \le C R^{(2-n)/2}.
\]
Therefore, we have
\begin{equation}							\label{eq:ip}
\bigabs{I_1'} \leq C R^{-n/2}\norm{\vec u}_{L^2(\Omega_R(x))}.
\end{equation}
By setting $\vec w= \eta^2 \vec u$ in \eqref{eq4.23h}, where $\eta \in C^\infty_c(B_R(x))$ is a cut-off function such that $\eta\equiv 1$ on $B_{R/2}(x)$ and $\abs{D\eta} \leq 4/R$, and use a standard argument, we derive
\[
\int_{\Omega_R(x)} \eta^2 \abs{D\vec u}^2 \leq C \int_{\Omega_R(x)} \abs{D\eta}^2 \abs{\vec u}^2+C\int_{\Omega_R(x)} \abs{\eta \vec f} \abs{\eta \vec u}.
\]
By the Sobolev inequality, H\"older's inequality, and Cauchy's inequality, we obtain
\begin{align*}
\int_{\Omega_R(x)} \abs{\eta\vec f} \abs{\eta \vec u}
&\leq \frac{\epsilon}{2}\,\norm{D(\eta \vec u)}_{L^2(\Omega_R(x))}^2 +C \epsilon^{-1}\norm{\eta \vec f}_{L^{2n/(n+2)}(\Omega_R(x))}^2\\
&\leq \epsilon \int_{\Omega_R(x)} \abs{D\eta}^2 \abs{\vec u}^2 +\epsilon \int_{\Omega_R(x)} \eta^2 \abs{D\vec u}^2 + C\epsilon^{-1}R^{n+2}\norm{\vec f}_{L^\infty(\Omega_R(x))}^2.
\end{align*}
By choosing $\epsilon$ small enough, we then obtain
\[
\int_{\Omega_R(x)} \eta^2 \abs{D\vec u}^2 \leq C \int_{\Omega_R(x)} \abs{D\eta}^2 \abs{\vec u}^2+C R^{n+2}\norm{\vec f}_{L^\infty(\Omega_R(x))}^2.
\]
Therefore, by using the estimate \eqref{eq2.17dd} we derive
\begin{equation}            \label{eq:iip}
\bigabs{I_2'} \leq C R^{-n/2} \norm{\vec u}_{L^2(\Omega_R(x))}+C R^2 \norm{\vec f}_{L^\infty(\Omega_R(x))}.
\end{equation}
By using the estimate \eqref{eq2.17dd} again, we also obtain
\begin{equation}            \label{eq:iiip}
\bigabs{I_3'} \leq C R^2 \norm{\vec f}_{L^\infty(\Omega_R(x))}.
\end{equation}
By combining above estimates \eqref{eq:ip}, \eqref{eq:iip}, and \eqref{eq:iiip}, we conclude from \eqref{eq4.28v} that
\begin{equation}							\label{eq5.29tp}
\norm{\vec u}_{L^\infty(\Omega_{R/4}(x))} \leq C \left(R^{-n/2} \norm{\vec u}_{L^2(\Omega_R)}+ R^2 \norm{\vec f}_{L^\infty(\Omega_R)} \right),
\end{equation}
where $C=C(n,m,\nu,C_0)$.
Since \eqref{eq5.29tp} holds for all $x\in\Omega$ and $R\in(0,R_{max})$, we obtain \eqref{LB} by a standard covering argument.
\hfill\qedsymbol

\subsection{Proof of Theorem~\ref{thm2}}				\label{ss5.4}
Notice that by Lemma~\ref{lem2.19} and Theorem~\ref{thm1b}, we have
\begin{equation}            \label{eq2.25t}
\abs{\vec G(x,y)} \leq C_0 \abs{x-y}^{2-n}\quad \text{if }\, 0<\abs{x-y}<R_{max},
\end{equation}
where $C_0=C_0(n,m,\nu, \mu_0,N_1)$.
To prove the estimate \eqref{eq2.26s}, we first claim that
\begin{equation}			\label{eq5.30ad}
\abs{\vec G(x,y)}  \leq C \bigset{d_x\wedge \abs{x-y}}^{\mu_0} \abs{x-y}^{2-n-\mu_0}\quad \text{if }\, 0<\abs{x-y}<R_{max},
\end{equation}
where $C=C(n,m,\nu, \mu_0,N_1)$.
The following lemma is the key to prove \eqref{eq5.30ad}.
										
\begin{lemma}           \label{lem3.6}
Assume the condition \eqref{LH}.
For $R\in (0, R_{max})$ and $x\in\Omega$ such that $d_x < R/2$, let $\vec u \in W^{1,2}(\Omega_R(x))$ be a weak solution of $L\vec u=0$ in $\Omega_R(x)$ vanishing on $\varSigma_R(x)$.
Then, we have
\begin{equation}            \label{eq3.7m}
\abs{\vec u(x)} \le C d_x^{\mu_0} R^{1-n/2-\mu_0}\norm{D\vec u}_{L^2(\Omega_R(x))},
\end{equation}
where $C=C(n,m,\nu, \mu_0, N_1)$.
\end{lemma}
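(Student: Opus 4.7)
Let $x_0 \in \partial\Omega$ satisfy $\abs{x - x_0} = d_x$. Extend $\vec u$ by zero across $\partial\Omega$ in $B_R(x)$, setting $\tilde{\vec u} := \chi_{\Omega_R(x)} \vec u$; since $\vec u$ vanishes on $\varSigma_R(x)$ in the $Y^{1,2}$ sense, $\tilde{\vec u} \in W^{1,2}(B_R(x))$ with $D \tilde{\vec u} = \chi_{\Omega_R(x)} D \vec u$ almost everywhere. Because $d_x < R/2$, both $B_{R/2}(x_0)$ and $B_{2 d_x}(x)$ lie inside $B_R(x)$. The strategy is to execute a Campanato-type argument simultaneously at the interior center $x$ and at the boundary center $x_0$, and to link the two via comparison of ball averages.

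Applying \eqref{LH} at $x$ with $s = R$ yields
\[
\int_{\Omega_r(x)} \abs{D\vec u}^2 \leq N_1 (r/R)^{n-2+2\mu_0} \norm{D\vec u}_{L^2(\Omega_R(x))}^2, \qquad 0 < r < R,
\]
and an analogous bound centered at $x_0$ for $0 < r \leq R/2$, obtained by approximating $x_0$ along a sequence in $\Omega$ (the constants in \eqref{LH} are uniform in the center and the relevant integrals depend continuously on the center) together with $\Omega_{R/2}(x_0) \subset \Omega_R(x)$. Combining with the standard Euclidean-ball Poincar\'e inequality applied to $\tilde{\vec u}$ gives the Campanato-type decay
\[
\int_{B_r(y)} \bigabs{\tilde{\vec u} - (\tilde{\vec u})_{B_r(y)}}^2 \leq C R^{2-n-2\mu_0} r^{n+2\mu_0} \norm{D\vec u}_{L^2(\Omega_R(x))}^2
\]
for $y \in \{x, x_0\}$ in the appropriate range of $r$, where $(\tilde{\vec u})_{B_r(y)} := \fint_{B_r(y)} \tilde{\vec u}$.

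Standard dyadic telescoping of these decays yields continuous representatives satisfying $\bigabs{\tilde{\vec u}(y) - (\tilde{\vec u})_{B_r(y)}} \leq C R^{1-n/2-\mu_0} r^{\mu_0} \norm{D\vec u}_{L^2(\Omega_R(x))}$ for $y \in \{x, x_0\}$. A standard comparison of ball averages through the intermediate ball $B_{2 d_x}(x) \supset B_{d_x}(x) \cup B_{d_x}(x_0)$, using the Campanato bound at $x$, gives
\[
\bigabs{(\tilde{\vec u})_{B_{d_x}(x)} - (\tilde{\vec u})_{B_{d_x}(x_0)}} \leq C R^{1-n/2-\mu_0} d_x^{\mu_0} \norm{D\vec u}_{L^2(\Omega_R(x))}.
\]
Since $\tilde{\vec u}(x) = \vec u(x)$ and $\tilde{\vec u}(x_0) = 0$, the triangle inequality applied to the three preceding estimates with $r = d_x$ delivers
\[
\abs{\vec u(x)} = \bigabs{\tilde{\vec u}(x) - \tilde{\vec u}(x_0)} \leq C d_x^{\mu_0} R^{1-n/2-\mu_0} \norm{D\vec u}_{L^2(\Omega_R(x))}.
\]

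The principal obstacle is the treatment of the boundary center $x_0$. Extending \eqref{LH} to $x_0 \in \partial\Omega$ is routine, via continuity of the integrals under translation of the center. More delicate is verifying that the continuous representative of $\tilde{\vec u}$ evaluates to zero at $x_0$; this is forced by the Campanato decay at $x_0$ together with $\tilde{\vec u} \equiv 0$ on $B_r(x_0) \cap \Omega^c$, which under \eqref{LH} drives the ball averages $(\tilde{\vec u})_{B_r(x_0)}$ to zero as $r \to 0$. All other ingredients (extension by zero, Poincar\'e on Euclidean balls, telescoping) are standard.
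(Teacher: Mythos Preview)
Your proposal is correct and follows the same strategy as the paper: extend by zero, combine \eqref{LH} with Poincar\'e to obtain Campanato decay for $\tilde{\vec u}$, and compare $\tilde{\vec u}(x)$ with the zero value taken outside $\Omega$. The paper streamlines the execution in two ways: it establishes the Campanato bound uniformly for \emph{all} centers $y\in B_{R/2}(x)$ and invokes Campanato's characterization of H\"older spaces to get $[\tilde{\vec u}]_{C^{\mu_0}(B_{R/2}(x))}\le C R^{1-n/2-\mu_0}\norm{D\vec u}_{L^2(\Omega_R(x))}$ in one stroke (rather than telescoping at two centers and linking via averages), and it compares $\tilde{\vec u}(x)$ with $\tilde{\vec u}(x')$ for a point $x'\in B_{R/2}(x)\setminus\Omega$ at distance $r>d_x$ and then lets $r\downarrow d_x$, so that $\tilde{\vec u}\equiv 0$ near $x'$ and the ``delicate'' boundary-evaluation issue you flag never arises.
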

\begin{proof}
Let $\tilde{\vec u}$ be an extension of $\vec u$ by zero on $B_R(x)\setminus \Omega$.
Notice that $\tilde{\vec u}\in W^{1,2}(B_R(x))$ and $D\tilde{\vec u} = \chi_{\Omega_R} D\vec u$ in $B_R(x)$.
Then by the Poincar\'e's inequality and \eqref{LH}, we find that for all $r\in (0,R/2]$ and $y\in B_{R/2}(x)$, we have
\begin{align*}
\int_{B_r(y)} \Abs{\tilde{\vec u}-\tilde{\vec u}_r}^2 & \le C r^2 \int_{B_r(y)} \abs{D \tilde{\vec u}}^2 = C r^2 \int_{\Omega_r(y)} \abs{D \vec u}^2\\
& \le C r^{n+2\mu_0} R^{-n+2-2\mu_0} \norm{D\vec u}_{L^2(\Omega_R(x))}^2.
\end{align*}
Then by the Campanato's characterization of H\"older seminorms, we have
\begin{equation}            \label{eq3.8x}
[\tilde{\vec u}]_{C^{\mu_0}(B_{R/2}(x))} \le C R^{1-n/2-\mu_0} \norm{D\vec u}_{L^2(\Omega_R(x))}.
\end{equation}
For any $r\in (d_x, R/2)$, there is $x'\in B_{R/2}(x) \setminus \Omega$ such that $\abs{x-x'}=r$. By \eqref{eq3.8x} we obtain
\[
\abs{\vec u(x)}=\bigabs{\tilde{\vec u}(x)-\tilde{\vec u}(x')} \leq C r^{\mu_0} R^{1-n/2-\mu_0}\norm{D\vec u}_{L^2(\Omega_R(x))}.
\]
By taking limit $r\to d_x$ in the above inequality, we derive \eqref{eq3.7m}.
\end{proof}

Now we are ready to prove the claim \eqref{eq5.30ad}.
We may assume that $d_x <\abs{x-y}/4$ because otherwise \eqref{eq5.30ad} follows from \eqref{eq2.25t}.
We then set $R=\abs{x-y}/2$ and $\vec u$ to be $k$-th column of $\vec G(\cdot,y)$, for $k=1,\ldots, m$, in Lemma~\ref{lem3.6} to obtain
\[
\abs{\vec G(x,y)} \leq C d_x^{\mu_0}R^{1-n/2-\mu_0} \norm{D \vec G(\cdot,y)}_{L^2(\Omega_R(x))};\quad R=\abs{x-y}/2.
\]
On the other hand, since $\Omega_R(x)\subset \Omega\setminus B_R(y)$ and $R<R_{max}/2$, we have by \eqref{eq3.9c} that
\[
\norm{D\vec G(\cdot,y)}_{L^2(\Omega_R(x))}  \le \norm{D\vec G(\cdot,y)}_{L^2(\Omega\setminus B_R(y))}  \le C R^{(2-n)/2}.
\]
By combining the above two inequalities, we find that
\[
\abs{\vec G(x,y)} \leq C d_x^{\mu_0}\abs{x-y}^{2-n-\mu_0},
\]
which implies \eqref{eq5.30ad} since we assume $d_x<\abs{x-y}/4$. We have proved the claim.

We prove the estimate \eqref{eq2.26s} using \eqref{eq5.30ad}.
Since the condition \eqref{LH} is symmetric between $L$ and $\Lt$, by applying the above argument to ${}^t\vec G(x,y)$ and then interchanging $x$ and $y$, we obtain, via the identity \eqref{eq2.20f} and Remark~\ref{rmk3.12}, that
\begin{equation}				\label{eq5.35wb}
\abs{\vec G(x,y)} \leq C \bigset{d_y \wedge \abs{x-y}}^{\mu_0} \abs{x-y}^{2-n-\mu_0}\quad\text{if }\,0<\abs{x-y}<2R_{max}.
\end{equation}

Again, we may assume that $d_x <\abs{x-y}/8$ to prove \eqref{eq2.26s} because otherwise \eqref{eq2.26s} would follow from \eqref{eq5.35wb}.
We set $R=\abs{x-y}/4$ and $\vec u$ to be $k$-th column of $\vec G(\cdot,y)$, for $k=1,\ldots, m$, in Lemma~\ref{lem3.6}, and then use the Caccioppoli's inequality to obtain
\begin{equation}				\label{eq5.36ui}
\abs{\vec G(x,y)} \leq C d_x^{\mu_0}R^{1-n/2-\mu_0} \norm{D \vec G(\cdot,y)}_{L^2(\Omega_R(x))}\leq C d_x^{\mu_0} R^{-n/2-\mu_0} \norm{\vec G(\cdot,y)}_{L^2(\Omega_{2R}(x))}.
\end{equation}
Notice that for all $z\in \Omega_{2R}(x)$, we have $2R<\abs{z-y}<6R$. Therefore, by the assumption $R=\abs{x-y}/4$ and \eqref{eq5.35wb}, we obtain
\begin{equation}				\label{eq5.37ce}
\abs{\vec G(z,y)}  \leq  C \bigset{d_y \wedge \abs{x-y}}^{\mu_0} \abs{x-y}^{2-n-\mu_0},\quad \forall z\in \Omega_{2R}(x).
\end{equation}
By combining \eqref{eq5.36ui} and \eqref{eq5.37ce}, we obtain
\[
\abs{\vec G(x,y)} \leq C d_x^{\mu_0} \abs{x-y}^{-\mu_0}\bigset{d_y \wedge \abs{x-y}}^{\mu_0} \abs{x-y}^{2-n-\mu_0},
\]
which implies \eqref{eq2.26s} since we assume  $d_x <\abs{x-y}/8$.
This completes the proof of \eqref{eq2.26s} for all $x, y\in \Omega$ satisfying $0<\abs{x-y}<R_{max}$.

Next, we prove the second part of the theorem. Suppose $R_{max}<\infty$ and $\diam(\Omega)<\infty$.
Let $x, y$ be arbitrary but fixed points in $\Omega$ satisfying $|x-y|\ge R_{max}/2$.
Let $R=R_{max}/4$ and $\vec v$ be the $k$-th column of $\vec G(\cdot,y)$ for $k=1,\ldots,m$.
Notice that $\vec v \in W^{1,2}(\Omega_R(x))$ and $\vec v$ is a weak solution of $L\vec v$ =0 in $B_R(x)$ vanishing on $\varSigma_R(x)$. Hence, by \eqref{eq2.8r} with $p=2n/(n-2)$, we have
\[
\norm{\vec G(\cdot,y)}_{L^\infty(\Omega_{R/2}(x))}  \le C R^{(2-n)/2} \norm{\vec G(\cdot,y)}_{L^{2n/(n-2)}(\Omega_R(x))}.
\]
Therefore, by the above estimate and \eqref{eq3.9c} we have
\begin{equation}            \label{eq4.37qe}
\abs{\vec G(x,y)} \leq C R^{(2-n)/2}\norm{\vec G(\cdot,y)}_{Y^{1,2}(\Omega\setminus B_R(y))}  \le C R^{2-n} \leq C R_{max}^{2-n}.
\end{equation}

On the other hand,  if we set $R=R_{max}/4$ and $\vec u$ to be the $k$-th column of $\vec G(\cdot,y)$ for $k=1,\ldots,m$, in Lemma~\ref{lem3.6}, then by  \eqref{eq3.9c} again, we find that if $d_x< R/4=R_{max}/8$, then
\[
\abs{\vec G(x,y)} \leq C d_x^{\mu_0}R^{1-n/2-\mu_0} \norm{D \vec G(\cdot,y)}_{L^2(\Omega_R(x))}\le C d_x^{\mu_0} R_{max}^{2-n-\mu_0}.
\]

By combining \eqref{eq4.37qe} and the above estimate, we derive the following conclusion.
\begin{equation}            \label{eq4.38ty}
\abs{\vec G(x,y)} \leq C (d_x \wedge R_{max})^{\mu_0}R_{max}^{2-n-\mu_0}\quad \text{whenever }\, \abs{x-y} \geq R_{max}/2.
\end{equation}
Then, by using \eqref{eq4.38ty} and arguing similarly as above, we obtain
\begin{equation}						\label{eq5.42a}
\abs{\vec G(x,y)} \leq C (d_x \wedge R_{max})^{\mu_0} (d_y \wedge R_{max})^{\mu_0} R_{max}^{2-n-2\mu_0}\quad \text{whenever }\, \abs{x-y} \geq R_{max}.
\end{equation}
Therefore, we conclude from \eqref{eq5.42a} that for all $x,y \in\Omega$ satisfying $\abs{x-y} \geq R_{max}$, we have
\[
\abs{\vec G(x,y)}  \leq C \bigset{d_x\wedge \abs{x-y}}^{\mu_0} \bigset{d_y \wedge \abs{x-y}}^{\mu_0} (R_{max}/\diam\Omega)^{2-n-2\mu_0} \abs{x-y}^{2-n-2\mu_0}.
\]
From the above estimate, we obtain \eqref{eq2.26s} in case when $\abs{x-y} \geq R_{max}$, with the constant $C$ replaced by $(R_{max}/\diam\Omega)^{2-n-2\mu_0} C$.
Recall that we already have \eqref{eq2.26s} in the case when $0<\abs{x-y}<R_{max}$.
The proof is complete.
\hfill\qedsymbol

\mysection{Appendix}    \label{sec:a}

\begin{lemma}   \label{lem2.19}
Assume the condition \eqref{LH}.
For any $p\in (n/2,\infty]$, there exists a constant $C=C(n,m,\nu,\mu_0,N_1,p)$ such that for all $x\in\Omega$, $R\in (0,R_{max})$, and $\vec f \in L^p(\Omega_R(x))$, the following holds:
If $\vec u \in W^{1,2}(\Omega_R(x))$ is a weak solution of either $L \vec u=\vec f$ or $\Lt \vec u=\vec f$ in $\Omega_R(x)$ and vanishes on $\varSigma_R(x)$, then we have
\begin{equation}							\label{eq6.01ap}
\norm{\vec u}_{L^\infty(\Omega_{R/2})} \le C \left(R^{-n/2} \norm{\vec u}_{L^2(\Omega_R)}+ R^{2-n/p} \norm{\vec f}_{L^\infty(\Omega_R)} \right);\quad \Omega_R=\Omega_R(x).
\end{equation}
In particular, the condition \eqref{LB} holds with the same $R_{max}$ and $N_0=N_0(n,m,\nu,\mu_0,N_1)$.
\end{lemma}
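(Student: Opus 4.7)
The plan is to extract \eqref{eq6.01ap} from a Morrey-type decay of $\int_{\Omega_\rho(y)}\abs{D\vec u}^2$, obtained by a standard Campanato-style freezing argument that treats $\vec f$ as a perturbation, then convert the decay into a H\"older bound on the zero-extension $\tilde{\vec u}$ exactly as in the proof of Lemma~\ref{lem3.6}, and finally deduce the $L^\infty$ bound from the H\"older seminorm together with the $L^2$ mean. The concluding assertion about \eqref{LB} follows by specializing to $p = \infty$.

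First I would fix $x \in \Omega$, $R \in (0, R_{max})$, $y \in \overline{\Omega_{R/2}(x)}$, and $0 < r \le R/2$, and decompose $\vec u = \vec v + \vec w$ on $\Omega_r(y)$, where $\vec w \in W^{1,2}_0(\Omega_r(y))$ is the Lax-Milgram solution of $L\vec w = \vec f$ and $\vec v := \vec u - \vec w$. Because $\vec u$ vanishes on $\varSigma_R(x) \supset \varSigma_r(y)$ and $\vec w$ vanishes on all of $\partial\Omega_r(y)$, the function $\vec v$ is a weak solution of $L\vec v = 0$ in $\Omega_r(y)$ vanishing on $\varSigma_r(y)$, so \eqref{LH} applies to $\vec v$. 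Testing $L\vec w = \vec f$ against $\vec w$ itself and using ellipticity, \eqref{eqP-14}, and H\"older's inequality (legitimate since $p > n/2 \ge 2n/(n+2)$ for $n \ge 3$) produces
\[
\int_{\Omega_r(y)} \abs{D\vec w}^2 \le C r^{n+2-2n/p} \bignorm{\vec f}_{L^p(\Omega_R(x))}^2.
\]
Combining this with the \eqref{LH} decay for $\vec v$ through the triangle inequality yields, with $\phi(\rho) := \int_{\Omega_\rho(y)} \abs{D\vec u}^2$,
\[
\phi(\rho) \le C_1 (\rho/r)^{n-2+2\mu_0}\, \phi(r) + C_2\, r^{n+2-2n/p}\, \bignorm{\vec f}_{L^p(\Omega_R(x))}^2,\qquad 0 < \rho \le r \le R/2.
\]

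The standard Campanato iteration lemma then upgrades this to $\phi(\rho) \le C \rho^{n-2+2\mu}$ for some $\mu \in (0, \mu_0]$ depending on $p$, with the constant $C$ controlled by $\phi(R/2)$ and $\bignorm{\vec f}_{L^p(\Omega_R)}^2$; a Caccioppoli inequality in turn bounds $\phi(R/2)$ by $C R^{-2} \bignorm{\vec u}_{L^2(\Omega_R)}^2$ up to a further $\bignorm{\vec f}_{L^p}^2$-term. Poincar\'e's inequality on the zero-extension $\tilde{\vec u} \in W^{1,2}(B_{R/2}(x))$ together with Campanato's characterization of H\"older spaces then delivers
\[
[\tilde{\vec u}]_{C^{\mu}(B_{R/4}(x))} \le C R^{-\mu}\left( R^{-n/2} \bignorm{\vec u}_{L^2(\Omega_R)} + R^{2-n/p} \bignorm{\vec f}_{L^p(\Omega_R)} \right),
\]
and the pointwise bound \eqref{eq6.01ap} on $\Omega_{R/4}(x)$ drops out by combining this seminorm with the $L^2$ mean of $\tilde{\vec u}$ on $B_{R/4}(x)$. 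A standard finite covering of $\Omega_{R/2}$ by balls of radius $R/4$ then promotes the estimate to all of $\Omega_{R/2}$, and the symmetry of \eqref{LH} between $L$ and $\Lt$ handles the adjoint case $\Lt\vec u = \vec f$ without separate argument. Finally, specializing $p=\infty$ reduces $R^{2-n/p}\bignorm{\vec f}_{L^p}$ to $R^2\bignorm{\vec f}_{L^\infty}$, which is precisely \eqref{LB}.

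The main delicate point will be the Campanato iteration: the dominant exponent alternates between $n-2+2\mu_0$ and $n+2-2n/p$ depending on whether $\mu_0 \ge 2-n/p$ or not, with a possible arbitrarily small exponent loss in the borderline case. This is routine for the iteration lemma and harmless for the $L^\infty$ bound sought here. The only other mildly technical step is the $L^p$-data Caccioppoli estimate used to bound $\phi(R/2)$, which is obtained in the standard way by testing with $\eta^2 \vec u$ for a suitable cutoff and absorbing the $\vec f$-$\vec u$ pairing via Sobolev.
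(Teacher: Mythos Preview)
Your proposal is correct and follows essentially the same route as the paper: decompose $\vec u=\vec v+\vec w$ with $\vec w\in W^{1,2}_0(\Omega_r(y))$ solving $L\vec w=\vec f$, apply \eqref{LH} to $\vec v$, estimate $\int\abs{D\vec w}^2$ by the energy inequality and H\"older, iterate \`a la Campanato, pass to the zero-extension $\tilde{\vec u}$ via Poincar\'e and Campanato's characterization, and close with Caccioppoli and a covering. The only cosmetic difference is that the paper fixes an auxiliary exponent $p_0\in(n/2,p)$ with $\mu_1:=2-n/p_0<\mu_0$ up front so that the iteration lemma applies cleanly with exponent $n-2+2\mu_1$, whereas you handle the competition between $n-2+2\mu_0$ and $n+2-2n/p$ (and the borderline loss) inside the iteration step itself.
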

\begin{proof}
We shall only consider the case when $\vec u$ is a weak solution of $L \vec u=\vec f$ since the proof of the other case is identical.
Throughout the proof, we denote by $C$ a constant depending on the prescribed parameters $n, m, \nu, p$ and also the numbers $\mu_0, N_1$ that appears in the condition \eqref{LH}.
As usual, the constant $C$ may vary from line to line.

Fix $R<R_{{max}}/4$ and let $\vec u$ be a weak solution of $L \vec u=\vec f$ in $\Omega_{4R}=\Omega_{4R}(x_0)$ vanishing on $\varSigma_{4R}$, where $\vec f \in L^p(\Omega_{4R})$ with $p \in (n/2,\infty]$.
Fix $x\in \Omega_R$ and $s \in (0,R]$.
We write $\vec u =\vec v+\vec w$ in $\Omega_s(x)$, where $\vec v \in W^{1,2}(\Omega_s(x))$ is a weak solution of $L \vec v=0$ in $\Omega_s(x)$ such that  $\vec v-\vec u \in W^{1,2}_0(\Omega_s(x))$. Notice that $\vec v$ vanishes on $\varSigma_s(x)$.
Then, \eqref{LH} implies that for $0<r<s$,
\begin{align*}
\int_{\Omega_r(x)} \abs{D\vec u}^2 &\le 2\int_{\Omega_r(x)} \abs{D\vec v}^2+2\int_{\Omega_r(x)} \abs{D\vec w}^2\\
&\le C \left(\frac{r}{s}\right)^{n-2+2\mu_0} \int_{\Omega_s(x)} \abs{D\vec v}^2+ 2\int_{\Omega_s(x)} \abs{D\vec w}^2\\
&\le C \left(\frac{r}{s}\right)^{n-2+2\mu_0} \int_{\Omega_s(x)} \abs{D\vec u}^2+C\int_{\Omega_s(x)} \abs{D\vec w}^2.
\end{align*}
Observe that $\vec w\in W^{1,2}_0(\Omega_s(x))$ and $\vec w$ is a weak solution of $L\vec w=\vec f$ in $\Omega_s(x)$. Therefore, we obtain
\[
\int_{\Omega_s(x)} \abs{D\vec w}^2 \le C \norm{\vec f}_{L^{2n/(n+2)}(\Omega_s(x))}^2.
\]
Choose $p_0\in (n/2,p)$ such that $\mu_1:=2-n/p_0<\mu_0$.
Then
\[
\norm{\vec f}_{L^{2n/(n+2)}(\Omega_s(x))}^2\le \norm{\vec f}_{L^{p_0}(\Omega_s(x))}^2 \abs{\Omega_s}^{1+2/n-2/p_0} \le C \norm{\vec f}_{L^{p_0}(\Omega_{2R})}^2 s^{n-2+2\mu_1}.
\]
By combining the above inequalities, we have for all $r<s\le R$
\[
\int_{\Omega_r(x)} \abs{D\vec u}^2 \le C\left(\frac{r}{s}\right)^{n-2+2\mu_0} \int_{\Omega_s(x)} \abs{D\vec u}^2+ C s^{n-2+2\mu_1}\norm{\vec f}_{L^{p_0}(\Omega_{2R})}^2.
\]
A well known iteration argument (see e.g., \cite[\S III.2]{Gi83}) yields that for all $r\in (0,R]$ and $x\in \Omega_R$, we have
\begin{equation}    \label{eq3.2}
\int_{\Omega_r(x)} \abs{D\vec u}^2 \le C\left(\frac{r}{R}\right)^{n-2+2\mu_1}\int_{\Omega_{2R}} \abs{D\vec u}^2+C r^{n-2+2\mu_1}\norm{\vec f}_{L^{p_0}(\Omega_{2R})}^2.
\end{equation}

Let $\tilde{\vec u}$ be an extension of $\vec u$ by zero on $B_{2R}\setminus \Omega_{2R}$.
Notice that $\tilde{\vec u}\in W^{1,2}(B_{2R})$ and $D \tilde{\vec u}= \chi_{\Omega_{2R}} D \vec u $ in $B_{2R}$.
Then by the Poincar\'e's inequality and \eqref{eq3.2}, we find that for all $r\in (0,R]$ and $x\in B_R$, we have
\begin{equation}    \label{eq3.3}
\int_{B_r(x)} \Abs{\tilde{\vec u}-\tilde{\vec u}_r}^2 \le C r^{n+2\mu_1}\left( R^{-n+2-2\mu_1} \norm{D\vec u}_{L^2(\Omega_{2R})}^2+\norm{\vec f}_{L^{p_0}(\Omega_{2R})}^2\right).
\end{equation}
Then it follows from \eqref{eq3.3} and H\"older's inequality that
\[
[\tilde{\vec u}]^2_{C^{\mu_1}(B_R)} \le C R^{-n+2-2\mu_1}\norm{D\vec u}_{L^2(\Omega_{2R})}^2+ CR^{4-2\mu_1-2n/p}\norm{\vec f}_{L^p(\Omega_{2R})}^2.
\]
Therefore, we obtain
\begin{align*}
\norm{\vec u}_{L^\infty(\Omega_{R/2})}^2 &\le C R^{2\mu_1} [\tilde{\vec u}]_{C^{\mu_1}(B_R)}^2+C R^{-n}\norm{\tilde{\vec u}}_{L^2(B_R)}^2 \\
&\le C R^{-n+2}\norm{D\vec u}_{L^2(\Omega_{2R})}^2+C R^{4-2n/p}\norm{\vec f}_{L^p(\Omega_{2R})}^2+C R^{-n}\norm{\vec u}_{L^2(\Omega_{R})}^2.
\end{align*}
Recall that $\vec u$ vanishes on $\varSigma_{4R}$.
By the Caccioppoli's inequality, we derive
\begin{align*}
\norm{D \vec u}_{L^2(\Omega_{2R})}^2 &\leq C R^{-2} \norm{\vec u}_{L^2(\Omega_{4R})}^2+C\norm{\vec f}_{L^{2n/(n+2)}(\Omega_{2R})}^2\\
&\leq C R^{-2} \norm{\vec u}_{L^2(\Omega_{4R})}^2+C R^{2+n-2n/p} \norm{\vec f}_{L^p(\Omega_{4R})}^2.
\end{align*}
By combining the above two inequalities and replacing $R$ by $R/4$, we obtain
\[
\norm{\vec u}_{L^\infty(\Omega_{R/8})} \le C R^{-n/2}\norm{\vec u}_{L^2(\Omega_R)}+ CR^{2-n/p} \norm{\vec f}_{L^p(\Omega_R)}.
\]
Finally, the above inequality together with a standard covering argument yields \eqref{eq6.01ap}.
The proof is complete.
\end{proof}

\begin{acknowledgment}
We thank the referee for useful comments.
Kyungkeun Kang was supported by the  Korean Research Foundation Grant (MOEHRD, Basic Research Promotion Fund, KRF-2008-331-C00024) and the National Research Foundation of
Korea(NRF) funded by the Ministry of Education, Science and Technology (2009-0088692).
Kyungkeun Kang appreciates the hospitality of the Department of Computational Science and Engineering,  Yonsei University.
Seick Kim was supported by the Korea Science and Engineering Foundation grant (MEST, No.~R01-2008-000-20010-0) and also by WCU(World Class University) program through the Korea Science and Engineering Foundation (MEST, No.~R31-2008-000-10049-0).
\end{acknowledgment}



\begin{thebibliography}{}

\bibitem{AAAHK}
Alfonseca, A.; Auscher, P.; Axelsson, A.; Hofmann, S.; Kim, S.
\textit{Analyticity of layer potentials and $L^{2}$ Solvability of boundary value problems for divergence form elliptic equations with complex $L^{\infty}$ coefficients}.
arXiv:0705.0836v1 [math.AP]

\bibitem{AT}
Auscher, P.; Tchamitchian, Ph.
\textit{Gaussian estimates for second order elliptic divergence operators on Lipschitz and $C\sp 1$ domains}.
Evolution equations and their applications in physical and life sciences (Bad Herrenalb, 1998),
15--32, Lecture Notes in Pure and Appl. Math., 215, Dekker, New York, 2001. 

\bibitem{CC09}
Chang, T.; Choe, H. J.
\textit{Estimates of the Green's functions for the elasto-static equations and Stokes equations in a three dimensional Lipschitz domain}.
Potential Anal. \textbf{30} (2009), no. 1, 85--99.

\bibitem{CDK}
Cho, S.; Dong, H.; Kim, S.
\textit{Global estimates for Green's matrix of second order parabolic systems with application to elliptic systems in two dimensional domains}.
preprint.

\bibitem{DK09}
Dong, H.; Kim, S.
\textit{Green's matrices of second order elliptic systems with measurable coefficients in two dimensional domains}.
Trans. Amer. Math. Soc. \textbf{361} (2009), no. 6, 3303-3323.

\bibitem{DM}
Dolzmann, G.; M\"{u}ller, S.
\textit{Estimates for Green's matrices of elliptic systems by $L\sp p$ theory}.
Manuscripta Math.  {\bf 88}  (1995),  no. 2, 261--273.

\bibitem{Fuchs84}
Fuchs, M.
\textit{The Green-matrix for elliptic systems which satisfy the Legendre-Hadamard condition}.
Manuscripta Math. \textbf{46} (1984),  no. 1-3, 97--115.

\bibitem{Fuchs86}
Fuchs, M.
\textit{The Green matrix for strongly elliptic systems of second order
with continuous coefficients}.
Z. Anal. Anwendungen \textbf{5} (1986), no. 6, 507--531.

\bibitem{Gi83}
Giaquinta, M.
\textit{Multiple integrals in the calculus of variations and nonlinear elliptic systems}.
Princeton University Press, Princeton, NJ, 1983.

\bibitem{Gi93}
Giaquinta, M.
\textit{Introduction to regularity theory for nonlinear elliptic systems}.
Birkh\"auser Verlag, Basel, 1993.

\bibitem{GT}
Gilbarg, D.; Trudinger, N. S.
\textit{Elliptic partial differential equations of second order}.
Reprint of the 1998 ed.
Springer-Verlag, Berlin, 2001.

\bibitem{GW}
Gr\"uter, M.; Widman, K.-O.
\textit{The Green function for uniformly elliptic equations}.
Manuscripta Math. \textbf{37} (1982), no. 3, 303--342.

\bibitem{HK07}
Hofmann, S.; Kim, S.
\textit{The Green function estimates for strongly elliptic systems of second order}.
Manuscripta Math. \textbf{124} (2007), no. 2, 139-172.

\bibitem{LSW}
Littman, W.; Stampacchia, G.; Weinberger, H. F.
\textit{Regular points for elliptic equations with discontinuous coefficients}.
Ann. Scuola Norm. Sup. Pisa (3) \textbf{17} (1963), 43--77.

\bibitem{MZ}
Mal\'y, J.; Ziemer, W. P.
\textit{Fine regularity of solutions of elliptic partial differential equations}.
American Mathematical Society, Providence, RI, 1997.

\bibitem{Sarason}
Sarason, D.
\textit{Functions of vanishing mean oscillation}.
Trans. Amer. Math. Soc. \textbf{207} (1975), 391--405.

\bibitem{Shen}
Shen, Z.
\textit{A note on the Dirichlet problem for the Stokes system in Lipschitz domains}.
Proc. Amer. Math. Soc. \textbf{123} (1995), no. 3, 801--811.

\end{thebibliography}
\end{document}